
\documentclass[11pt,a4paper,oneside]{amsart}
 \usepackage[a4paper]{geometry} 
 \geometry{left=2.5cm,right=2.5cm,top=3cm,bottom=3cm}



\usepackage{amsmath} 
\usepackage{amssymb} 
\usepackage{amsthm} 
\usepackage{stmaryrd} 
\usepackage[english]{babel} 
\usepackage[font=small,justification=centering]{caption} 
\usepackage[nodayofweek]{datetime}
\usepackage[shortlabels]{enumitem} 
\usepackage[T1]{fontenc} 
\usepackage[utf8]{inputenc} 
\usepackage{ifthen} 
\usepackage{mathabx} 
\usepackage{mathtools} 
\usepackage{setspace} 
\onehalfspacing
\usepackage{tikz-cd} 
\usepackage{xfrac} 
\usepackage[capitalize]{cleveref} 

\makeatletter
\@namedef{subjclassname@1991}{Mathematical subject classification 1991}
\@namedef{subjclassname@2000}{Mathematical subject classification 2000}
\@namedef{subjclassname@2010}{Mathematical subject classification 2010}
\@namedef{subjclassname@2020}{Mathematical subject classification 2020}
\makeatother

\newtheorem{thm}{Theorem}[section]
\newtheorem{lemma}[thm]{Lemma}
\newtheorem{corollary}[thm]{Corollary}
\newtheorem{prop}[thm]{Proposition}
\newtheorem{conjecture}[thm]{Conjecture}

\newtheorem{thmx}{Theorem}
\newtheorem{corx}[thmx]{Corollary}

\theoremstyle{definition}

\newtheorem{remark}[thm]{Remark}

\theoremstyle{plain}

    \newtheoremstyle{TheoremNum}
        {\topsep}{\topsep} 
        {\itshape} 
        {-0.25cm} 
        {\bfseries} 
        {.} 
        { }  
        {\thmname{#1}\thmnote{ \bfseries #3}}
    \theoremstyle{TheoremNum}

\newcommand*{\claimproofname}{My proof}



\DeclareMathOperator{\Aut}{\mathrm{Aut}}
\DeclareMathOperator{\Out}{\mathrm{Out}}


\newcommand{\cale}{{\mathcal{E}}}

\newcommand{\calg}{{\mathcal{G}}}

\newcommand{\calt}{{\mathcal{T}}}







\newcommand{\GL}{\mathrm{GL}}










\def\Z{\mathbb{Z}}

\newcommand{\NN}{\mathbb{N}}
\newcommand{\ZZ}{\mathbb{Z}}

\newcommand{\RR}{\mathbb{R}}


\usepackage{tikz}
\usetikzlibrary{arrows,quotes}
\tikzstyle{blackNode}=[fill=black, draw=black, shape=circle]

\title[Torsion homology growth]{Torsion homology growth of polynomially growing free-by-cyclic groups}

\usepackage[foot]{amsaddr}
\author{Naomi Andrew}
\author{Sam Hughes}
\author{Monika Kudlinska}
\address{Mathematical Institute, Andrew Wiles Building, Observatory Quarter, University of Oxford, Oxford OX2 6GG, UK}
\email{naomi.andrew@maths.ox.ac.uk}
\email{sam.hughes@maths.ox.ac.uk}
\email{kudlinska@maths.ox.ac.uk}
\date{\formatdate{10}{1}{2023}}

\subjclass[2020]{Primary 20J05; Secondary 20E05, 20E08, 20E26, 20F28, 57M07}


\begin{document}

\begin{abstract}
    We show that the homology torsion growth of a free-by-cyclic group with polynomially growing monodromy vanishes in every dimension independently of the choice of Farber chain.  It follows that the integral torsion $\rho^\mathbb{Z}$ equals the $\ell^2$-torsion $\rho^{(2)}$ verifying a conjecture of L\"uck for these groups.
\end{abstract}

\maketitle

\section{Introduction}
There are a number of notions of volume for a hyperbolic $3$-manifold $M$; namely, its hyperbolic volume $\mathrm{Vol}(M)$, the minimal volume entropy $\cale_{\rm min}(M)$, and the $\ell^2$-torsion $\rho^{(2)}(M)$. Combining results of L\"uck--Schick \cite{LueckSchick1999} for $\rho^{(2)}$, and Gromov \cite{Gromov1982}, Soma \cite{Soma1981}, and Thurston \cite{Thurston1978} for $\cale_{min}$, we see that for finite volume hyperbolic $3$-manifolds each of the invariants are proportional.  Conjecturally, (see \cite{BergeronVentktesh2013}) they are also proportional to a fourth invariant, the integral torsion
\[\rho^\Z (M)\coloneqq \sum_{j\geq0}t^{(2)}_j(M;M_n),\]
where
\[t^{(2)}_j(M;M_n)\coloneqq\limsup_{n\to\infty}\frac{\log |H_j(M_n;\Z )_{\mathrm{ tors}}|}{|M:M_n|}\]
is the torsion homology gradient with respect to a \emph{Farber sequence} $(M_n)_{n\in\NN}$ of finite covers.  See \Cref{sec rebuilding} for a definition of Farber sequence  --- the main example is a sequence of regular covers such that the intersection of the inclusions of the fundamental groups into $\pi_1 M$ is trivial.

Note that in particular, the conjecture states that the limit supremums are actually genuine limits and the limit is independent of the choice of Farber sequence.

\begin{conjecture}[L\"uck]\emph{\cite[Conjecture~1.11(3)]{Lueck2013}}
    Let $G$ be an infinite residually finite $\ell^2$-acyclic group of type $\mathsf{VF}$.  Then, $\rho^{(2)}(G)=\rho^\Z (G)$.
\end{conjecture}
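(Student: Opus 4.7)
The plan is to reduce the identity $\rho^{(2)}(G) = \rho^\Z(G)$ to a torsion analogue of L\"uck's approximation theorem. After replacing $G$ by a torsion-free finite index subgroup (which preserves both invariants and the hypotheses), fix a finite $\mathsf{CW}$ model $X$ for $BG$ and pass to the universal cover $\widetilde X$ with its $\Z G$-cellular chain complex. Each subgroup $G_n$ in a given Farber sequence $(G_n)$ produces a finite cover $X_n$ whose combinatorial Laplacians $\Delta_j^{(n)}$ are finite integer matrices.

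The first step is the classical Reidemeister-style identity expressing $\log \lvert H_j(X_n;\Z)_{\mathrm{tors}}\rvert$ in terms of the regularised log-determinants $\log \det{}'(\Delta_j^{(n)})$, with a correction absorbed by the free parts $H_j(X_n;\Z)_{\mathrm{free}}$. Under the $\ell^2$-acyclicity assumption, L\"uck's approximation theorem forces $b_j(X_n)/[G:G_n] \to 0$, so the free-part contribution is $o([G:G_n])$. Combining with alternating signs and dividing by $[G:G_n]$, the conjecture reduces to the assertion
\[
\lim_{n\to\infty} \frac{\log \det{}'(\Delta_j^{(n)})}{[G:G_n]} \;=\; \log \det\nolimits_{\mathcal{N}(G)}(\Delta_j)
\]
for every $j$, where the right-hand side is the Fuglede-Kadison determinant over the group von Neumann algebra.

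For Farber sequences the normalised spectral measures $\mu_j^{(n)}$ of $\Delta_j^{(n)}$ are known to converge weakly to the spectral measure $\mu_j$ of $\Delta_j$ acting on $C_*(\widetilde X) \otimes_{\Z G} \ell^2(G)$. Weak convergence is enough on the complement of any neighbourhood of $0$, so the whole problem concentrates on uniform integrability of $\log\lambda$ against the measures $\mu_j^{(n)}$ near $\lambda = 0$. The natural attack is to establish a uniform Novikov--Shubin-type bound of the shape $\mu_j^{(n)}([0,\lambda]) \lesssim 1/\lvert \log \lambda\rvert^{1+\varepsilon}$, which by a Fatou-type argument forces termwise convergence of the log-integrals and thus the desired equality of torsion.

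The main obstacle is precisely this last step: it is essentially the content of L\"uck's \emph{Determinant Conjecture} for $G$, which in full generality remains a deep open problem. It is known for sofic groups by work of Elek--Szab\'o and subsequent refinements, so a realistic partial strategy is to identify structural features of $G$ (soficity, residual amenability, or an explicit fibering/iterated-HNN decomposition) from which uniform control of the small eigenvalues of the integer Laplacians can be extracted. Absent such structure I do not see how to control the near-zero spectrum of arbitrary $G$-equivariant integer Laplacians, and I would expect any proof of the conjecture as stated, at the present state of the art, to proceed by assuming additional hypotheses on $G$ (such as those arising in the body of this paper) rather than by attacking the Determinant Conjecture for all $\mathsf{VF}$ groups at once.
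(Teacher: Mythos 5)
This statement is an open conjecture of L\"uck, quoted by the paper; the paper does not prove it in general and does not claim to. What the paper actually establishes is the special case where $G$ is a free-by-cyclic group with polynomially growing monodromy, and it does so by a route entirely different from yours: it shows directly that every torsion homology gradient $t^{(2)}_j(\Gamma;\Gamma_n)$ vanishes (via the cheap $\alpha$-rebuilding property of Abert--Bergeron--Fraczyk--Gaboriau, propagated up a hierarchy of graph-of-groups splittings coming from relative train tracks), so that $\rho^\Z(\Gamma)=0$, and then quotes Clay's upper bound to get $\rho^{(2)}(\Gamma)=0$. The equality $\rho^{(2)}=\rho^\Z$ holds for these groups because both sides are zero --- no determinant approximation is ever needed.

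Your proposal is an accurate description of the standard analytic strategy for the general conjecture: reduce to convergence of normalised regularised determinants $\log\det'(\Delta_j^{(n)})/[G:G_n]$ to the Fuglede--Kadison determinant, observe that weak convergence of spectral measures handles the spectrum away from $0$, and identify the near-zero spectrum (uniform integrability of $\log\lambda$, i.e.\ a Determinant-Conjecture-type statement over Farber sequences) as the obstruction. You correctly and honestly conclude that this obstruction is a deep open problem and that the argument cannot be completed at present. That diagnosis is right, and it is precisely why the statement remains a conjecture. The only caveat worth adding is that even soficity does not resolve the issue in the form you need: the Elek--Szab\'o results give \emph{semicontinuity} of determinants (an inequality in one direction), not the two-sided convergence required for $\rho^\Z=\rho^{(2)}$, which is why the conjecture is open even for residually finite (hence sofic) groups. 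So your proposal is not a proof and should not be presented as one, but as an assessment of the general problem it is sound; for the purposes of this paper the relevant content is the special-case argument via vanishing, which your approach does not touch.
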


L\"{u}ck proved the conjecture for fundamental groups of closed aspherical manifolds which admit non-trivial $S^1$-action, or where the group contains a non-trivial elementary amenable normal subgroup \cite[Corollary~1.13]{Lueck2013}. The conjecture also appears to be known for amenable groups (see \cite{KarKrophollerNikolov2017} for $\rho^{\Z}$ and \cite{LiThom2014} for $\rho^{(2)}$) and for groups acting on simplicial complexes with strict fundamental domain and with stabilisers containing normal abelian subgroups \cite{OkunSchreve2021}.  

Free-by-cyclic groups (understood to mean $\{$finitely generated free$\}$-by-$\{$infinite cyclic$\}$) are a natural algebraic generalisation of cusped $3$-manifolds which fibre over the circle.  However, due to their generality there is no notion of volume for them.  Due to the proportionality principle alluded to above it is tempting to take one of the other invariants $\rho^\Z$, $\rho^{(2)}$, or $\cale_{\mathrm min}$ as a definition of volume for free-by-cyclic groups.  Unfortunately computing these invariants is extremely difficult.  Indeed, there is a lower bound for $\cale_{\mathrm min}$ \cite{BregmanClay2021} (using \cite{BabenkoSabourau2020}) and for $\rho^{(2)}$ \cite{Clay2017} as well as some partial progress towards computing $\rho^{(2)}$ via a `chain flaring' condition \cite{Clay2021}.  There is an upper bound for $\rho^\Z$ for hyperbolic $3$-manifolds \cite{Le2018}. But for both hyperbolic $3$-manifolds and all free-by-cyclic groups with infinite order monodromy there are no explicit computations of $\rho^\Z$.

\begin{thmx}\label{thm main}
    Let $\Gamma=F_m\rtimes_\varphi\Z$.  If $\varphi$ is polynomially growing, then $t^{(2)}_j(\Gamma;\Gamma_n)=0$, independently of the choice of a Farber sequence $(\Gamma_n)_{n\in\NN}$, for all $j\geq 0$.
\end{thmx}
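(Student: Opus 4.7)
The approach is to argue by induction along a \emph{rebuilding sequence} for $\Gamma$ provided by the Bestvina--Feighn--Handel theory of polynomially growing automorphisms of free groups, with base case $\Z$. I would begin by reducing to the case where $\varphi$ is unipotent polynomially growing (UPG): any polynomially growing outer automorphism of $F_m$ has a UPG power, so $\Gamma$ contains a finite-index subgroup $\Gamma'=F_m\rtimes_{\varphi^N}\Z$, and vanishing of the torsion gradient is invariant under commensurability (the pulled back sequence $(\Gamma_n\cap\Gamma')$ remains Farber in $\Gamma'$ and the gradients agree up to the bounded factor $[\Gamma:\Gamma']$).

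Next, the UPG hypothesis supplies a $\varphi$-invariant filtration $1=F_0<F_1<\cdots<F_k=F_m$ by free factors with $F_{i+1}=F_i*\langle x_{i+1}\rangle$ and $\varphi(x_{i+1})=x_{i+1}u_{i+1}$ for some $u_{i+1}\in F_i$. Setting $K_i:=F_i\rtimes_\varphi\Z$, I obtain a tower $\Z=K_0<K_1<\cdots<K_k=\Gamma'$ in which each inclusion $K_i\hookrightarrow K_{i+1}$ realises $K_{i+1}$ as an HNN extension of $K_i$ with stable letter $x_{i+1}$ and infinite cyclic edge groups $\langle t\rangle$ and $\langle tu_{i+1}^{-1}\rangle$. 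This is the rebuilding framework set up in \Cref{sec rebuilding}.

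The inductive step asserts that if the torsion gradient vanishes along every Farber sequence for $K_i$ and every $j$, the same holds for $K_{i+1}$. Given a Farber sequence $(\Lambda_n)$ in $K_{i+1}$, Bass--Serre theory applied to the HNN splitting writes each $\Lambda_n$ as the fundamental group of a finite graph of groups with vertex groups $\Lambda_n^{(v)}$ finite-index in conjugates of $K_i$ and edge groups finite-index in $\Z$. Since edge groups contribute no torsion in positive degree, Mayer--Vietoris gives
\[\log\bigl|H_j(\Lambda_n;\Z)_{\mathrm{tors}}\bigr|\leq\sum_{v}\log\bigl|H_j(\Lambda_n^{(v)};\Z)_{\mathrm{tors}}\bigr|+O(1),\]
and dividing by $[K_{i+1}:\Lambda_n]=\sum_v [K_i:\Lambda_n^{(v)}]$ should deliver vanishing after an invocation of the inductive hypothesis.

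The hard part is precisely that invocation: the family $\{\Lambda_n^{(v)}\}$ does not \emph{a priori} form a Farber sequence in $K_i$, because passing to vertex stabilisers in a Bass--Serre tree can introduce drift in the conjugacy class of vertex groups as $n$ varies, and the number of vertex orbits can grow with $n$. Closing the induction will require a strengthened statement---for instance, a uniform vanishing of $t^{(2)}_j$ over all finite-index subgroups of a suitable class, or a direct verification that the IRSs induced on $K_i$ by vertex stabilisers still weakly converge to the atom at the identity---so that the rebuilding reduction passes cleanly between levels. I expect the bulk of \Cref{sec rebuilding} to be devoted to establishing exactly this robustness.
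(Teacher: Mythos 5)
Your reduction to the UPG case matches the paper's first step, but the inductive scheme you propose has two genuine gaps, one of which you acknowledge and one of which you do not. The unacknowledged one is the Mayer--Vietoris inequality: the bound $\log|H_j(\Lambda_n;\Z)_{\mathrm{tors}}|\leq\sum_v\log|H_j(\Lambda_n^{(v)};\Z)_{\mathrm{tors}}|+O(1)$ is false. Torsion in the homology of a graph of groups is not controlled by the torsion of the vertex groups alone; it also arises from the cokernels of the maps between the \emph{free} parts of the edge and vertex homologies. The simplest illustration is the mapping torus itself: $H_1(F_m\rtimes_\varphi\Z)_{\mathrm{tors}}$ is the torsion of $\mathrm{Coker}(\varphi_*-1)$ on $\Z^m$, which can be arbitrarily large even though $H_1(F_m)$ and $H_1(\Z)$ are torsion-free. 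So even if the induced vertex sequences were Farber, your displayed inequality would not close the induction. The acknowledged gap --- that the vertex stabilisers $\{\Lambda_n^{(v)}\}$ need not form a Farber sequence in $K_i$ and that the number of vertex orbits grows with the index --- is exactly the obstruction that a naive Mayer--Vietoris induction cannot overcome, and no argument is supplied for the "strengthened statement" you correctly say is needed. (A smaller issue: a general UPG automorphism need not preserve a flag of free factors each obtained from the previous by adding a single basis element; the Bestvina--Feighn--Handel filtration lives on a graph that need not be a rose, and the subgraphs $G_i$ need not be connected.)

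The paper routes around all of this by never working with torsion homology directly. It establishes the \emph{cheap $\alpha$-rebuilding property} of Abert--Bergeron--Fraczyk--Gaboriau for $\Gamma$ (\Cref{prop main}), and then a single black-box theorem (\Cref{rebuilding implies vanishing}) converts that property into vanishing of $t^{(2)}_j$ for every Farber sequence. The rebuilding property is a commensurability invariant (\Cref{commensurate rebuilding}), which justifies passing to a UPG power, and it satisfies a combination theorem over actions on trees (\Cref{rebuilding}): if the cell stabilisers have the property, so does the group. This is precisely the "robustness across levels" you anticipated would be needed, but it is imported from \cite{AbertBergeronFraczykGaboriau2021} rather than proved via induced Farber sequences. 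The induction is then on the polynomial growth degree $d$ rather than on the rank: \Cref{thm hierarchy} splits a degree-$d$ UPG mapping torus ($d\geq 2$) as a graph of groups with $\Z$ edge groups and vertex groups that are mapping tori of degree at most $d-1$, with base cases $d=0$ ($F_m\times\Z$, handled by the normal $\Z$ criterion \Cref{normal Z rebuilding}) and $d=1$ (the splitting of \Cref{linear splitting} with $F_{m_v}\times\Z$ vertex groups and $\Z^2$ edge groups). To repair your approach you would need either to prove the ABFG-style combination statement yourself or to replace the torsion estimate by one that accounts for the free parts of the edge-to-vertex maps; as written, neither step is available.
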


The theorem follows from establishing the \emph{cheap rebuilding property} and deducing the vanishing from the breakthrough work in \cite{AbertBergeronFraczykGaboriau2021}.  The cheap rebuilding property was introduced by Abert--Bergeron--Fraczyk--Gaboriau in loc. cit. and used to prove homology torsion growth vanishing for many families of groups.   We will give the background relevant to us below.
    
The next corollary follows from Clay's upper bound on the $\ell^2$-torsion of a free-by-cyclic group \cite[Theorem~5.1]{Clay2017}.
    
\begin{corx}
Let $\Gamma=F_m\rtimes_\varphi\Z$.  If $\varphi$ is polynomially growing, then $\rho^\Z (\Gamma)=\rho^{(2)}(\Gamma)=0$.
\end{corx}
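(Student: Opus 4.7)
The plan is to induct along the filtration supplied by a relative train track representative of $\varphi$, using the rebuilding framework of \Cref{sec rebuilding} to propagate vanishing of torsion homology growth across each HNN extension in the induction.

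After replacing $\varphi$ by a suitable positive power (a finite-index reduction that is compatible with the Farber-sequence setup, since an arbitrary Farber sequence in $\Gamma$ restricts to one in any finite-index subgroup), I would assume $\varphi$ is unipotent polynomially growing. The improved relative train track theorem of Bestvina-Feighn-Handel then produces a graph $G$ with $\pi_1(G) \cong F_m$ carrying a $\varphi$-invariant filtration $\emptyset = G_0 \subset G_1 \subset \cdots \subset G_k = G$, where each new stratum adds a single edge $e_i$ whose image under the train track representative of $\varphi$ is of the form $e_i \cdot u_i$ for some closed loop $u_i \subset G_{i-1}$. Translating to groups, one obtains a tower $\Gamma_{(0)} \subset \Gamma_{(1)} \subset \cdots \subset \Gamma_{(k)} = \Gamma$ in which each inclusion $\Gamma_{(i)} \hookrightarrow \Gamma_{(i+1)}$ is an HNN extension of an explicit form dictated by $u_{i+1}$, with the vertex and edge groups both sitting inside $\Gamma_{(i)}$.

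Starting from $\Gamma_{(0)} \cong \ZZ$, for which torsion growth vanishes trivially along every Farber sequence, I would apply the rebuilding machinery at each step of the tower to deduce that the torsion homology growth of $\Gamma_{(i+1)}$ vanishes along every Farber sequence in $\Gamma_{(i+1)}$. Iterating $k$ times yields the theorem for $\Gamma$.

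The principal obstacle is that the trace $\Gamma_n \cap \Gamma_{(i)}$ of a Farber sequence $(\Gamma_n)$ in $\Gamma_{(i+1)}$ is in general not itself a Farber sequence in $\Gamma_{(i)}$, so the inductive hypothesis cannot be invoked verbatim on the lower-level vertex group. The rebuilding framework is designed precisely to navigate this subtlety: by replacing the group-theoretic Farber condition with a more flexible chain-complex-level approximation condition that is both implied by the Farber assumption on $\Gamma_{(i+1)}$ and strong enough to force vanishing of torsion growth on the pieces, it bypasses the need to produce a Farber sequence at each level of the tower. Verifying that the HNN extensions arising from the UPG filtration satisfy the hypotheses of the rebuilding theorem -- in particular, checking that the edge-group inclusions and the loops $u_i$ are sufficiently controlled by the polynomial growth of $\varphi$ -- is where the main technical work lies, and constitutes the step I expect to be most delicate.
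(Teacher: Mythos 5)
Your proposal only proves half of the corollary. The statement asserts both $\rho^\Z(\Gamma)=0$ and $\rho^{(2)}(\Gamma)=0$, and everything you outline is aimed at the torsion homology gradients $t^{(2)}_j$, i.e.\ at $\rho^\Z$. The vanishing of the $\ell^2$-torsion $\rho^{(2)}$ is a separate analytic statement and cannot be deduced from $\rho^\Z(\Gamma)=0$: the implication $\rho^{(2)}=\rho^\Z$ is precisely L\"uck's conjecture, which is what the corollary is meant to verify, so invoking it here would be circular. In the paper this half is imported from Clay's upper bound on the $\ell^2$-torsion of free-by-cyclic groups \cite[Theorem~5.1]{Clay2017}, which forces $\rho^{(2)}(\Gamma)=0$ when the monodromy grows polynomially. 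Your argument needs this (or some substitute) as an independent input.

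For the $\rho^\Z$ half, your route is essentially the paper's proof of \Cref{thm main}: pass to a UPG power (legitimate by \Cref{commensurate rebuilding}), produce splittings over $\Z$ from an improved relative train track, feed them into the combination theorem \Cref{rebuilding}, and conclude via \Cref{rebuilding implies vanishing}. The difference is that you induct stratum-by-stratum along the train track filtration, whereas the paper inducts on the degree of polynomial growth, collapsing all lower-growth components at once (\Cref{thm hierarchy}) and using the linear splitting of \Cref{linear splitting} as a base case; your finer induction should also work, since each added edge yields an HNN extension or amalgam of the lower-level mapping tori over the cyclic group generated by (a conjugate of) the stable letter. Two corrections of emphasis, though. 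First, the hypotheses of \Cref{rebuilding} require no control whatsoever on the loops $u_i$ or on growth rates --- only that cell stabilisers have the cheap rebuilding property, which for the $\Z$ edge groups is \Cref{normal Z rebuilding} and for the vertex groups is your inductive hypothesis; the polynomial growth of $\varphi$ is used upstream, to guarantee that every stratum satisfies $f(E_i)=E_i\cdot\rho_i$ so that the stable letter generates an edge group at all (this fails for exponential strata). Second, your concern about traces of Farber sequences is exactly the issue the rebuilding formalism is built to absorb, so the ``main technical work'' you anticipate is already packaged in the quoted theorems rather than left to you.
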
    

Combining the theorem with a result of Bregman--Clay \cite{BregmanClay2021} where they compute the minimal volume entropy of free-by-cyclic groups we obtain the following corollary.

\begin{corx}
Let $\Gamma=F_m\rtimes_\varphi\Z $.  If $\varphi$ is polynomially growing but $ \Gamma$ is not virtually tubular, then $\rho^\Z (\Gamma)\neq c\cdot\cale_{\rm min}(\Gamma)$ for any non-zero real-number $c$.
\end{corx}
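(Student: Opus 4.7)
The plan is to combine the preceding corollary with Bregman--Clay's computation of the minimal volume entropy. Since $\varphi$ is polynomially growing, the first corollary already gives $\rho^\Z(\Gamma)=0$. Hence it suffices to show that $\cale_{\rm min}(\Gamma)\neq 0$ under the additional hypothesis that $\Gamma$ is not virtually tubular: granting this, an equality $\rho^\Z(\Gamma)=c\cdot\cale_{\rm min}(\Gamma)$ would force $c\cdot\cale_{\rm min}(\Gamma)=0$ with $\cale_{\rm min}(\Gamma)\neq 0$, so $c$ must be zero.

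For the non-vanishing of the minimal volume entropy I would appeal directly to \cite{BregmanClay2021}. Their work identifies the vanishing locus of $\cale_{\rm min}$ among polynomially growing free-by-cyclic groups, and in the relevant formulation this locus is precisely the class of virtually tubular groups. One cites the appropriate theorem from that paper to conclude that $\cale_{\rm min}(\Gamma)>0$ in our setting, and the corollary then follows by the dichotomy in the previous paragraph.

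The main (and essentially the only) obstacle is bookkeeping: one has to verify that the statement of Bregman--Clay indeed delivers strict positivity under the precise hypothesis ``polynomially growing but not virtually tubular'', rather than under a stronger assumption such as irreducibility or atoroidality. Once this is checked, the result is immediate from the juxtaposition of a vanishing statement on the torsion gradient side with a non-vanishing statement on the geometric side; no additional ideas beyond those already appearing in \Cref{thm main} and in \cite{BregmanClay2021} are required.
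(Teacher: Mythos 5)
Your proposal is correct and is exactly the argument the paper intends: the preceding corollary gives $\rho^{\Z}(\Gamma)=0$, while Bregman--Clay's computation shows $\cale_{\rm min}(\Gamma)>0$ for polynomially growing monodromy precisely outside the virtually tubular case, so no non-zero constant of proportionality can exist. The bookkeeping point you raise is the right one to check, and it is settled by the formulation of Bregman--Clay's vanishing criterion quoted in the paper.
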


Here $\Gamma$ being \emph{virtually tubular} implies there is a finite index subgroup $\Lambda\leq\Gamma$ such that $\Lambda$ splits as a graph of groups with vertex groups isomorphic to $\Z^2$ and edge groups is isomorphic to $\Z$.  Note that such a splitting implies that $\varphi$ is linearly growing.

It is tempting to conjecture a sharper relation between the degree of the monodromy and the homology torsion growth.

\begin{conjecture}
    Let $\Gamma=F_m\rtimes_\varphi\Z$.  If $\varphi$ is polynomially growing of degree $d$, then, 
    \[\lim_{n\to\infty} \frac{|H_1(\Gamma_n;\ZZ)_{\mathrm{tors}}|}{|\Gamma:\Gamma_n|^d}=c \]
    for some $c\in\RR_{\geq0}$ and any Farber sequence $(\Gamma_n)_{n\in\NN}$ of $\Gamma$.
\end{conjecture}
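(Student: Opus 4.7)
My strategy is to proceed by induction on the polynomial growth degree $d$ of $\varphi$, relying on the Bestvina--Feighn--Handel structure theorem for polynomially growing automorphisms. The first step is to pass to a finite-index characteristic subgroup, which is permitted because restricting a Farber sequence to a fixed finite-index subgroup yields a Farber sequence and preserves the existence of the limit. This reduces to the case where $\varphi$ is unipotent polynomially growing (UPG), so that $\varphi_*$ acts on $H_1(F_m; \ZZ) \cong \ZZ^m$ as a unipotent matrix $I + N$ with $N$ nilpotent of degree related to $d$. For the base case $d = 0$ the monodromy becomes trivial and $\Gamma \cong F_m \times \ZZ$; a direct inspection of finite-index subgroups of $F_m \times \ZZ$ (combined with the main theorem of this paper) shows that $H_1(\Gamma_n;\ZZ)_\mathrm{tors}$ is bounded, so that $c = 0$.

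For the inductive step, the BFH theorem provides a $\varphi$-invariant filtration of $F_m$ by free factors $1 = F^{(0)} \triangleleft F^{(1)} \triangleleft \cdots \triangleleft F^{(d)} = F_m$ along which the polynomial growth degree strictly decreases on each subquotient. Intersecting a Farber sequence $(\Gamma_n)$ of $\Gamma$ with each subextension $F^{(i)} \rtimes \ZZ$ produces Farber sequences of the subextensions to which the induction hypothesis applies. The Wang exact sequence associated to the surjection $\Gamma_n \onto \ZZ$ then reduces the computation of $H_1(\Gamma_n; \ZZ)_\mathrm{tors}$ to the torsion part of the coinvariants $(H_1(\Gamma_n \cap F_m; \ZZ))_{(\varphi^{k_n})_*}$, where $k_n$ is the image of $\Gamma_n$ in $\ZZ$. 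Since $\varphi_* = I + N$, we have $(\varphi^{k_n})_* - I = \sum_{j=1}^d \binom{k_n}{j} N^j$, whose leading term $\binom{k_n}{d} N^d$ scales like $k_n^d$. The Smith normal form of this matrix then has elementary divisors of total size polynomial in $k_n$ of degree $d$, which when combined with the inductive contribution from $|F_m : \Gamma_n \cap F_m|$ should produce the claimed $|\Gamma:\Gamma_n|^d$ scaling.

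The main obstacle will be establishing genuine convergence, rather than a mere limsup, and independence from the choice of Farber sequence. This requires uniform control of the Smith normal forms of $(\varphi^{k_n})_* - I$ when restricted to the varying sublattices $H_1(\Gamma_n \cap F_m; \ZZ) \subseteq \ZZ^m$, which change with $n$ in a way dictated by the Farber condition but need not stabilise. I expect the constant $c$ to admit a concrete description as a determinant-like invariant of $N^d$ restricted to the top stratum of the BFH filtration, perhaps a logarithmic determinant in the spirit of $\ell^2$-Alexander torsion; convergence could then be established by a L\"uck-type approximation argument adapted to torsion coefficients and to the unipotent setting. Identifying this invariant intrinsically, in a way manifestly independent of the Farber sequence and of the auxiliary finite-index reduction to UPG, is the most substantial point to settle.
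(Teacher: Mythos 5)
The statement you are addressing is stated in the paper as a \emph{conjecture}; the paper offers no proof of it, and its main theorem establishes only the much weaker fact that the \emph{exponential} growth rate $t^{(2)}_1(\Gamma;\Gamma_n)$ vanishes. Your proposal is a strategy sketch rather than a proof, and you acknowledge as much: the ``most substantial point'' you defer --- genuine convergence and independence of the Farber sequence --- is precisely the content of the conjecture, so nothing is actually established.

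Beyond that, several concrete steps would fail as written. First, free factors of a free group are never normal (except in trivial cases), so the claimed $\varphi$-invariant filtration $1 = F^{(0)} \triangleleft F^{(1)} \triangleleft \cdots \triangleleft F^{(d)} = F_m$ by free factors does not exist; the Bestvina--Feighn--Handel structure theory filters a topological representative $f\colon G \to G$ by subgraphs, and what one extracts group-theoretically (as in Proposition~\ref{thm hierarchy}) is a graph-of-groups splitting with cyclic edge groups, not a normal series. Second, the Wang sequence for $\Gamma_n \onto \ZZ$ expresses $H_1(\Gamma_n;\ZZ)$ via the coinvariants of $H_1(\Gamma_n \cap F_m;\ZZ)$, but this is a free abelian group of rank $1 + [F_m : \Gamma_n \cap F_m](m-1)$, which grows with $n$; it is \emph{not} a sublattice of $H_1(F_m;\ZZ)\cong\ZZ^m$. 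Consequently your Smith normal form analysis of $(\varphi^{k_n})_* - I = \sum_{j\ge1}\binom{k_n}{j}N^j$ on the fixed lattice $\ZZ^m$ sees only the quotient $H_1(F_m;\ZZ)$ of the relevant module and misses essentially all of the torsion; the matrix whose elementary divisors you need to control is the monodromy action on the homology of the cover, of unbounded size. Third, the base case $d=0$ asserts boundedness of $|H_1(\Gamma_n;\ZZ)_{\mathrm{tors}}|$ for finite-index subgroups of $F_m\times\ZZ$ by ``direct inspection''; such subgroups need not split as direct products, and Theorem~\ref{thm main} gives only subexponential torsion growth, which does not imply boundedness, nor convergence of $|H_1(\Gamma_n;\ZZ)_{\mathrm{tors}}|$ itself (which is what the conjecture demands when $d=0$). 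Each of these would need to be repaired before the approach could even be evaluated as a candidate proof.
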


\subsection*{Structure of the paper}
In \Cref{sec FbyZ} we recall the necessary background on free-by-cyclic groups.  In \Cref{thm hierarchy} we give a self-contained proof of a splitting of a polynomially growing free-by-cyclic group with stabilisers of strictly lower polynomial growth (note that this result is well known to experts).  In \Cref{sec rebuilding} we give the relevant background from \cite{AbertBergeronFraczykGaboriau2021} which we will need to compute the homology torsion growth.  Finally, in \Cref{sec proofs} we prove the main theorem.

\subsection*{Acknowledgements}
This work has received funding from the European Research Council (ERC) under the European Union's Horizon 2020 research and innovation programme (Grant agreement No. 850930). The third author was supported by an Engineering and Physical Sciences Research Council studentship (Project Reference 2422910). 

The first and second authors would like to thank the organisers and funders of the Dame Kathleen Ollerenshaw Workshop ``Modern advances in geometric group theory'', held in Manchester during September 2022, where they began this work.  All three authors would like to thank Ric Wade for a number of helpful conversations and both Dawid Kielak and Armando Martino for carefully reading an earlier draft of this note.

The third author would like to thank her supervisors Martin Bridson and Dawid Kielak for continuous support during her DPhil.

\section{Background on free-by-cyclic groups}\label{sec FbyZ}

Let $F$ be a finite rank free group; we write $F_m$ if the rank $m$ is relevant. Let $\varphi$ be an element of $\Aut(F)$, representing an element $\Phi$ of $\Out(F)$. Throughout, we work with free-by-cyclic groups $F \rtimes_\varphi \ZZ$. Note that the free-by-cyclic groups defined by representatives of the same outer automorphism $\Phi$ are isomorphic, and so we are free to consider the group $F \rtimes_\Phi \ZZ$ without specifying a representative.

Fix a free set of generators of $F$. For any $g\in F$, we denote by $|g|$ the length of the reduced word representative of $g$. We write $|\bar{g}|$ to denote the minimal length of a cyclically reduced word representing a conjugate of $g$. 

An outer automorphism $\Phi \in \mathrm{Out}(F)$ acts on the conjugacy classes of elements in $F$. Given a conjugacy class $\bar{g}$ of an element $g\in F$, we say that $\bar{g}$ grows polynomially of degree $d$ under the iteration of $\Phi$, if there exist constants $C_1, C_2 > 0$ such that for all $n\geq 1$, \[C_1 n^d \leq |\Phi^n(\bar{g})| \leq C_2 n^d.\] Note that if $H \leq F$ is a subgroup whose conjugacy class in $F$ is preserved by $\Phi$, then for any $g \in H$ the order of growth of the conjugacy class $\bar{g}$ in $H$ is equal to the order of growth of the conjugacy class in $F$. We say $\Phi$ grows polynomially of degree $d$ if every conjugacy class of elements of $F$ grows polynomially of degree at most $d$ under the iteration of $\Phi$, and there exists a conjugacy class which grows polynomially of degree exactly $d$.

For a detailed discussion of the growth of elements or conjugacy classes under automorphisms or outer automorphisms see \cite{Levitt2009}. Note that by \cite[Lemma 2.3]{Levitt2009} and the subsequent example, there are linearly growing outer automorphisms for which there are elements growing quadratically (with respect to reduced word length) under certain representative automorphisms, so the choice to use conjugacy growth is necessary for our arguments.

An element $\Phi \in \mathrm{Out}(F_m)$ is said to be \emph{unipotent polynomially growing (UPG)}, if it is polynomially growing and it induces a unipotent element of $\mathrm{GL}(m, \mathbb{Z})$. It follows from \cite[Corollary 5.7.6]{BestvinaFeighnHandel00} that if $\Phi \in \Out(F)$ is polynomially growing, then it has a power $\Phi^k$ that is UPG; in fact $k$ can be taken to depend only on the rank of $F$. Note that the restriction of a UPG automorphism to an invariant free factor $H$ is again UPG.  The unipotence can be seen for instance by taking the restriction of the induced element of $\GL(m,\ZZ)$ to the image of $H$.

We will consider certain topological representatives $f$ of $\Phi$ on a graph $G$, and will need to understand growth of edges, paths and (cyclically reduced) loops $\gamma$ in $G$ under iteration of $f$: this is defined analogously to growth of conjugacy growth, but taking the lengths of reduced images $f^n(\gamma)$ in the path metric on $G$. (In fact, the definition of conjugacy growth can be recovered by representing $\Phi$ on a rose, and taking cyclically reduced loops representing each conjugacy class.)

By \cite[Theorem~5.1.8]{BestvinaFeighnHandel00}, any UPG outer automorphism $\Phi$ admits a relative train track representative $f \colon G \to G$ with the following additional properties. There exists an associated filtration $\emptyset = G_0 \subset G_1 \subset \ldots \subset G_n = G$, such that $G_{i+1}$ is constructed from $G_{i}$ by adding an oriented edge $E_{i+1}$. The map $f$ fixes every vertex of $G$, and for each $i \leq n$ there exists a closed path $\rho_i$ whose image lies in $G_{i-1}$, and such that $f(E_i) = E_i \cdot \rho_i$. In the language of \cite{BestvinaFeighnHandel00} this image is \emph{split}: $f^k(E_i \cdot \rho_i) = f^k(E_i) f^k(\rho_i)$, and no cancellation occurs between these images at any iterate. In particular, this means that no cancellation can occur between $f^k(\rho_i)$ and $f^{k+1}(\rho_i)$. If $\rho_i$ is non-constant, we may also assume that it is immersed, and we can assume that distinct $E_i$ have distinct suffixes $\rho_i$  (\cite[Remark 3.12]{BestvinaFeighnHandel05}). We call any such relative train track representative an \emph{improved relative train track}.

The following facts about improved relative train tracks representing UPG outer automorphisms are established by combining \cite[Lemmas 4.1.4 and 5.5.1]{BestvinaFeighnHandel00} (see also \cite[Lemma 6.5]{Levitt2009}). A \emph{Nielsen path} is a path that is fixed under iterating $f$ (after \emph{tightening} to remove backtracks as necessary). An \emph{exceptional path} is a path of the form $E_i \tau^k \overline{E}_j$, where $k$ is an integer, $\tau$ is a loop which is a Nielsen path (one can assume it is not a proper power), and $\rho_i, \rho_j$ are both positive powers of $\tau$. Exceptional paths do not split; under the assumption that distinct edges have distinct suffixes they are Nielsen paths if $i=j$ and have linear growth otherwise.

\begin{lemma}
    \label{BFH facts}
	Let $f$ be an improved relative train track representative for a UPG automorphism, and let $u$ be a path or loop in $G_k$. Then there is a power $M$ so that $f^M_\#(u)$ splits into $E_k$, $\overline{E}_k$, paths $\gamma$ contained in $G_{k-1}$, and exceptional paths (which may be Nielsen paths). If $E_k$ grows at least quadratically then the exceptional paths do not occur.
\end{lemma}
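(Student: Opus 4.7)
The plan is to extract the splitting directly from the improved relative train track machinery of Bestvina--Feighn--Handel and then argue separately, by a growth comparison, that exceptional paths cannot appear in the quadratic case.

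First, I would invoke \cite[Lemma~4.1.4]{BestvinaFeighnHandel00}, which produces, for some iterate $M$, a splitting of $f^M_\#(u)$ into subpaths along which no further cancellation occurs when iterating. Analysing the pieces of this splitting by induction on the filtration height $k$, each factor is of one of three types: (i) a single copy of $E_k$ or $\overline E_k$ which is not part of a larger exceptional pairing, (ii) a subpath $\gamma$ lying entirely in $G_{k-1}$, or (iii) an exceptional path $E_i\tau^j\overline E_{j'}$ with $i,j'\leq k$. That subpaths containing the top-stratum edges $E_k,\overline E_k$ are exactly of the above forms is the content of the second assertion of \cite[Lemma~5.5.1]{BestvinaFeighnHandel00}; Levitt's account \cite[Lemma~6.5]{Levitt2009} gives a clean way to organise the induction.

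For the dichotomy regarding exceptional paths, I would compare growth rates. The recursion $f(E_k)=E_k\cdot\rho_k$ and the absence of cancellation in iterates (which is built into the improved relative train track structure) yield
\[
|f^n_\#(E_k)| \;=\; 1 + \sum_{i=0}^{n-1} |f^i_\#(\rho_k)|,
\]
so the polynomial degree of $E_k$ is exactly one greater than that of $\rho_k$. Hence if $E_k$ grows at least quadratically, then $\rho_k$ grows at least linearly. But an exceptional path of the form $E_k\tau^j\overline E_{j'}$ requires $\rho_k$ to be a positive power of the Nielsen loop $\tau$, and since $\tau$ is fixed by $f$ every positive power of $\tau$ has growth degree $0$. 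This contradicts $\rho_k$ having positive degree, ruling out any exceptional path whose first or last edge is $E_k$ or $\overline E_k$.

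The main obstacle is the first step: BFH's papers use several related notions of splitting (weak splittings, complete splittings, splittings into elementary factors), and one has to be careful to invoke the version that guarantees that top-stratum edges not forming exceptional pairs genuinely appear as isolated factors, rather than as parts of longer paths that only split further at higher iterates. Once the correct version is identified, the growth argument in the second step is essentially a one-line computation.
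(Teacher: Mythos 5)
Your proposal follows essentially the same route as the paper, which states this lemma without proof and attributes it to the combination of Lemmas 4.1.4 and 5.5.1 of Bestvina--Feighn--Handel (and Levitt's Lemma 6.5) --- exactly the sources you invoke for the splitting statement. Your explicit growth computation for the quadratic case is correct and matches the paper's preliminary observation that exceptional paths are either Nielsen paths or linearly growing: since an exceptional path containing $E_k$ forces $\rho_k$ to be a power of a Nielsen loop, the split relation $f^n_\#(E_k)=E_k\cdot\rho_k\cdot f_\#(\rho_k)\cdots f^{n-1}_\#(\rho_k)$ makes $E_k$ grow at most linearly, contradicting at-least-quadratic growth.
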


The following result is true in the more general setting of \emph{Kolchin maps}, however to avoid introducing extra terminology we will restrict to the case of improved relative train tracks, and indicate a proof using this technology.

\begin{lemma}\emph{\cite[Lemma~2.16]{Macura02}}\label{Macura}
Let $f \colon G \to G$ be an improved relative train track representative of an UPG outer automorphism. Let $E_i$ be an edge of $G$ which grows polynomially of degree $d \geq 2$, and suppose that $f(E_i) = E_i \cdot \rho_i$. Then every edge in $\rho_i$ grows polynomially of degree at most $d -1$, and there exists an edge which grows polynomially of degree $d - 1$.
\end{lemma}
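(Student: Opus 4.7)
The plan is to derive both statements from the iterated splitting of $f(E_i) = E_i \cdot \rho_i$.

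First, I would exploit the defining property of improved relative train tracks, namely that the factorisation $f(E_i) = E_i \cdot \rho_i$ splits with no cancellation at the seam and that this splitting persists under further iteration. This yields
\[
f^n(E_i) \;=\; E_i \cdot \rho_i \cdot f_\#(\rho_i) \cdots f_\#^{n-1}(\rho_i)
\]
for every $n \geq 0$, with no cancellation between consecutive factors. Taking lengths gives
\[
|f^n(E_i)| \;=\; 1 + \sum_{k=0}^{n-1}|f_\#^k(\rho_i)|.
\]
If the path $\rho_i$ grows polynomially of degree $e$ under iteration of $f_\#$, then the right-hand side grows polynomially of degree exactly $e + 1$. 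Matching against $|f^n(E_i)| \asymp n^d$ forces $e = d - 1$, so $\rho_i$ grows polynomially of degree exactly $d - 1$ as a path in $G$.

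Second, I would transfer this from $\rho_i$ to its constituent edges. Write $\rho_i = E_{j_1} \cdots E_{j_s}$ as a reduced edge path, and let $d_{j_l}$ denote the growth degree of $E_{j_l}$ and $e^\ast = \max_l d_{j_l}$. The elementary tightening bound $|f_\#^n(\rho_i)| \leq \sum_l |f^n(E_{j_l})|$ shows that $d - 1 \leq e^\ast$, giving the existence of an edge in $\rho_i$ of growth degree at least $d - 1$. For the upper bound $e^\ast \leq d - 1$, I would apply \Cref{BFH facts} to $\rho_i \subset G_{i-1}$: for some $M$, the iterate $f_\#^M(\rho_i)$ splits into edges of the top stratum of $G_{i-1}$, paths in $G_{i-2}$, and exceptional paths, and persistence of splittings implies that the growth of $\rho_i$ equals the maximum growth degree among these pieces. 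The no-cancellation property of improved relative train tracks --- that a top-stratum occurrence in any subpath of $\rho_i$ has neighbours in strictly lower strata and so survives cancellation in iterated images --- then allows one to descend stratum by stratum and conclude that the growth of $\rho_i$ dominates the growth of each of its edges, giving $e^\ast \leq d - 1$. Combining these inequalities yields $e^\ast = d - 1$ and both claims follow.

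The main obstacle is the cancellation analysis in the second step: it is not automatic that an edge of a tightened path grows no faster than the path itself, and this is precisely where the improved relative train track structure is essential, via \Cref{BFH facts} and the hereditary no-cancellation property of top-stratum edges.
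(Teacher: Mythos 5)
Your overall architecture is close to the paper's: the telescoping identity $f^n(E_i)=E_i\cdot\rho_i\cdot f_\#(\rho_i)\cdots f_\#^{n-1}(\rho_i)$ and the resulting finite-difference computation $\deg(E_i)=\deg(\rho_i)+1$ is exactly how the paper gets the existence of a degree-$(d-1)$ edge (together with the elementary bound $\deg(\rho_i)\le\max_l\deg(E_{j_l})$), and both arguments lean on \Cref{BFH facts} for the converse inequality. So the first half of your proposal is fine.

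The gap is in your second step, and it sits precisely where you yourself locate "the main obstacle." Your claim that "the growth of $\rho_i$ dominates the growth of each of its edges" is false as a general statement about improved relative train tracks: a Nielsen path (growth degree $0$) and an exceptional path (growth degree $\le 1$) can contain linearly growing edges, which is exactly the failure mode the paper flags in the remark preceding its proof. That particular counterexample only involves degree-$1$ edges, so it does not kill the lemma (one only needs domination for edges of degree $\ge 2$), but it shows that the domination is not a formal consequence of "no cancellation at the seam" and must be argued. Your proposed mechanism --- apply \Cref{BFH facts} once to split off the top edge $E_k$ of the stratum containing $\rho_i$, then "descend stratum by stratum" --- only exposes $E_k$ itself. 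The lower pieces $\gamma$ in that splitting are subpaths of the tightened image $f^M_\#(\rho_i)$, not of $\rho_i$, so for a fast-growing edge $E_j$ with $j<k$ you must still explain why some occurrence of $E_j$ survives into one of those pieces rather than being cancelled during tightening; nothing in your "hereditary no-cancellation property" (which you state only for top-stratum occurrences) delivers this.

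The paper closes exactly this hole by running an induction on height in the filtration: the inductive hypothesis (the lemma for all lower edges) lets one \emph{rearrange the filtration} so that it respects growth degrees, i.e.\ so that the topmost edge $E_k$ occurring in $\rho_i$ is also its fastest-growing edge. Then a single application of \Cref{BFH facts} (with the observation that $E_k$ growing at least quadratically excludes exceptional paths) shows $\deg(\rho_i)\ge\deg(E_k)=\max_l\deg(E_{j_l})$, and the telescoping identity finishes the argument. To repair your proof you would either need to reproduce this rearrangement-by-induction, or supply an independent argument that an at-least-quadratically-growing edge of $\rho_i$ cannot have all its occurrences cancelled or absorbed into exceptional paths under iteration; the former is the intended route.
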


Note that this lemma fails for linear growth: paths containing linearly growing edges can be fixed, and so $E_i$ may be linearly growing despite mapping over another such edge.
\begin{proof}

	That the bound is attained (including the ``moreover'' statement) follows since a finite difference argument gives that if $u_n$ grows polynomially of degree $d$, then $f^\ell_\#(E_n) = E_n \cdot u_n \cdot f_\#(u_n) \cdot f^2_\#(u_n) \cdot \ldots \cdot f^{\ell-1}_\#(u_n)$ grows polynomially with degree $d+1$, and the growth of $u_n$ is bounded above by the growth of its edges, and the paths in its splitting(s).
	
	The proof of the first claim is by induction on height in the filtration. The base case is the first $i$ in the filtration for which $E_i$ is at least quadratically growing; this cannot map over any other such edge by definition.
	
	We need a fairly strong inductive hypothesis. Suppose that the claim is true for every edge $E_i$ with height less than $n$, and note that this means that the filtration can be rearranged so that the filtration on $G_{n-1}$ respects the degrees of growth (in the sense that the top edge may be taken to have the fastest growth). We now show that if $E_n$ maps over an edge known to have polynomial growth of degree at least $2$, then $E_n$ has higher growth.
	
	Consider $E_n$ and suppose that $E_k$ grows polynomially with degree $d \geq 2$ and is the topmost edge in $u_n$. From the inductive hypothesis, we can safely assume that $E_k$ has the fastest growth of edges in $u_n$. \Cref{BFH facts} implies that for some (perhaps very large) $M$, $f^M_\#(u_n)$ splits as edges $E_k$ or $\overline{E}_k$, and paths strictly lower in the filtration. On further iteration of $f$, the edges $E_k$ and $\overline{E}_k$ grow polynomially with degree $d$; in particular so does any sufficiently high iterate of $u_n$. So $E_n$ grows polynomially with degree $d+1$.
\end{proof}

We now tie together the two notions of growth: that of edges under an improved relative train track representing $\Phi$, and that of conjugacy classes under $\Phi$.

\begin{lemma}
    Any non-trivial UPG outer automorphism $\Phi$ of $F_n$ enjoys the same degree of polynomial growth as the fastest growing edge in an improved relative train track representing $\Phi$.
\end{lemma}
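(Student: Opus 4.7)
The plan is to prove the equality by establishing both inequalities, relating conjugacy growth of $\Phi$ to the growth of edges under an improved relative train track $f \colon G \to G$ representing $\Phi$.

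For the upper bound, any conjugacy class of $F_n$ is represented by a cyclically reduced loop $\gamma$ in $G$, whose iterates satisfy $|f^n_\#(\gamma)| \leq \sum_{e \in \gamma} |f^n_\#(e)|$. Since each edge grows polynomially of degree at most $d$, the loop $\gamma$ does as well, and hence so does the conjugacy class it represents.

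For the lower bound, the task is to produce an explicit conjugacy class growing polynomially of degree exactly $d$. First, rearrange the filtration (as in the inductive argument in the proof of \Cref{Macura}) so that the topmost edge $E_n$ realises the maximum growth degree $d$. Next, construct a loop $\gamma$ in $G=G_n$ that uses $E_n$: if $E_n$ is a closed edge, take $\gamma = E_n$; otherwise, since $f$ fixes every vertex the endpoints of $E_n$ are fixed, and one can concatenate $E_n$ with any edge-path in $G$ from the terminal to the initial vertex of $E_n$ to obtain a loop.

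Then \Cref{BFH facts} applies: some iterate $f^M_\#(\gamma)$ splits as a concatenation of $E_n$, $\overline{E}_n$, paths in $G_{n-1}$, and exceptional paths, with no cancellation occurring between these pieces under further iteration. Since $\gamma$ genuinely uses $E_n$, and the image $f(E_n) = E_n \cdot \rho_n$ begins with a literal $E_n$ followed by a path in $G_{n-1}$, at least one copy of $E_n$ or $\overline{E}_n$ appears in the splitting. Because the splitting is preserved by iteration, $|f^{M+j}_\#(\gamma)| \geq |f^j_\#(E_n)|$, which grows polynomially of degree $d$, while every other piece contributes growth of degree at most $d$. Thus $\gamma$ grows polynomially of degree exactly $d$, and since cyclic reduction changes length only by a bounded additive constant, the conjugacy class $[\gamma]$ grows polynomially of degree $d$ under $\Phi$.

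The main obstacle will be to verify that $E_n$ really does survive in the splitting of some iterate $f^M_\#(\gamma)$, since in principle tightening could cancel the $E_n$-factor of $\gamma$ against material coming from the connecting path $\beta$. The resolution is to exploit the form $f^j_\#(E_n) = E_n \cdot \rho_n \cdot f_\#(\rho_n)\cdots f^{j-1}_\#(\rho_n)$, whose initial letter is always $E_n$: only a bounded amount of cancellation can occur at the seam with the iterate of $\beta$ (which lives in $G_{n-1}$), so the contribution of $E_n$ persists and is picked up by the splitting guaranteed by \Cref{BFH facts}.
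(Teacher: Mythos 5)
Your upper bound and the overall strategy for the lower bound match the paper's, and for degrees $d\geq 2$ your argument essentially works: by \Cref{BFH facts}, exceptional paths do not occur in the splitting when the top edge grows at least quadratically, so a standalone copy of $E_n$ survives and carries degree-$d$ growth. The gap is in the linearly growing case, which is exactly where the hypothesis that $\Phi$ is non-trivial must be used --- and your argument never invokes it, which is a warning sign. The step ``at least one copy of $E_n$ or $\overline{E}_n$ appears in the splitting, hence $|f^{M+j}_\#(\gamma)|\geq |f^j_\#(E_n)|$'' is not valid: \Cref{BFH facts} allows pieces of the splitting to be exceptional paths, and an exceptional path $E_n\tau^k\overline{E}_n$ (same index at both ends) is a Nielsen path, fixed under iteration even though $E_n$ itself grows linearly. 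So every occurrence of $E_n$ in your loop may be swallowed by non-growing pieces, and the loop then gives no lower bound at all. This is not a vacuous worry: if $E_n$ is separating and the component of $G_{n-1}$ attached to its terminal vertex is a single cycle $\tau$ with $\rho_n$ a power of $\tau$, then every loop through $E_n$ contains $E_n\tau^k\overline{E}_n$ as a subpath, and the remark following the paper's proof notes that such configurations genuinely occur in valid improved relative train tracks (they can even represent the trivial outer automorphism, for which no conjugacy class grows).

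Relatedly, your loop construction and your ``seam'' analysis implicitly assume $E_n$ is non-separating: if $E_n$ separates, the connecting path $\beta$ from its terminal to its initial vertex must itself traverse $\overline{E}_n$, so it does not lie in $G_{n-1}$, the loop has the form $E_n\gamma_1\overline{E}_n\gamma_2$, and both ends of $E_n$ must be analysed. To close the gap you need the paper's case analysis: when the splitting presents $E_n$ as a standalone edge, or inside an exceptional path $E_n\tau^k\overline{E}_i$ with $i\neq n$ (which grows linearly), the lower bound goes through; in the remaining case one chooses a different $\gamma_1$, or pushes $E_n$ down the filtration below the other linearly growing edges and restarts with a new top edge; and if no such choice is available, then every linearly growing edge bounds a single cycle by a Nielsen path and the represented outer automorphism is trivial, contradicting the hypothesis.
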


This fact is certainly used implicitly in the literature, but we are unaware of an explicit proof and so we include one here for completeness.

\begin{proof}
    Note that since the universal covers of $G$ and the rose with $n$ petals are quasi-isometric, the growth of a conjugacy class under $\Phi$ is equal to the growth of any cyclically reduced loop in $G$ realising it.

    The growth of such a loop is bounded above by the growth of its edges. We will see that (provided $\Phi$ is non-trivial) it is always possible to construct a loop growing polynomially with the same degree. By \cref{Macura}, we can assume that the top edge in the filtration on $G$ has the highest growth (for the linear case, we might not have free choice as to which linearly growing edge is topmost, but one can certainly ensure that all linearly growing edges appear after all fixed edges).
    
    There are two cases to consider, with some additional subtleties in the case of linear growth. First suppose the top edge is non-separating, so there is a loop $u=E_n \gamma$, where $\gamma$ lives in $G_{n-1}$ (this loop is cyclically reduced, and so are its iterates). By \cref{BFH facts}, eventually some iterate splits either as $E_n \cdot \gamma'$, or with an exceptional path $E_n \tau^k E_i \cdot \gamma'$ (here $i < n$, and $\gamma'$ could be trivial). In the first case $u$ grows at least as $E_n$ does; in the second case both $E_n$ and $E_n \tau^k \overline{E}_i$ are linearly growing and again this bounds the growth of $u$ below.
    
    Now suppose the top edge is separating, so we must consider loops of the form $E_n \gamma_1 \overline{E}_n \cdot \gamma_2$, where $\gamma_1,\gamma_2$ are non-trivial loops. This loop is split as indicated (between $\overline{E}_n$ and $\gamma_2$), and as before all iterates remain cyclically reduced. We apply \cref{BFH facts}: if eventually some iterate of $u$ splits either as $E_n \cdot \gamma_1' \cdot \overline{E}_n \cdot \gamma_2'$, or with one exceptional path and one edge (either as $E_n \tau^k \overline{E}_i \cdot \gamma_1' \cdot \overline{E}_n \cdot \gamma_2'$ or as $E_n \cdot \gamma_1' \cdot E_j \tau^\ell \overline{E}_n \cdot \gamma_2'$), then $u$ grows as $E_n$ does and we are done.  Otherwise, $E_n$ is linearly growing and both $E_n$ and $\overline{E}_n$ form part of exceptional paths in the splitting eventually provided by \cref{BFH facts}. If this splitting is of the form $E_n \tau^k \overline{E}_i \cdot \gamma_1' \cdot E_j \tau^\ell \overline{E}_n \cdot \gamma_2'$, then as before $E_n \tau^k \overline{E}_i$ is linearly growing and this gives a lower bound for the growth of $u$.
    
    The remaining possibility is that $E_n \gamma_1 \overline{E_n}$ is already an exceptional path. Provided the connected component containing $\gamma_1$ is not a single cycle, one can choose some other $\gamma_1$ so that this does not occur. If this is not possible, $E_n$ does not map over another linearly growing edge, so the filtration can be rearranged so $E_n$ is below all other linearly growing edges. Provided there are linearly growing edges that do not separate a single cycle, such an edge can be assumed to be topmost, and one of the arguments above will go through. If all linearly growing edges are of this form, then the outer automorphism induced by this topological representative is trivial.
\end{proof}

\begin{remark}
    The kind of pathological representative of the trivial automorphism described at the end of this proof does seem to be a valid improved relative train track, though it would be outlawed by various improvements on this technology -- we chose to simplify our exposition by not taking these extra definitions and properties.
\end{remark}

The following proposition is well known to the experts (see for instance \cite{Macura02, Hagen2019}, and also \cite[Theorem 4.22]{BestvinaFeighnHandel05}). We include the proof here for completeness; 

\begin{prop}\label{thm hierarchy}
Let $\Phi \in \mathrm{Out}(F_m)$ be an UPG outer automorphism with polynomial growth of degree $d \geq 2$ and let $\Gamma = F_m \rtimes_{\Phi} \mathbb{Z}$. Then, $\Gamma$ splits as a finite graph of groups with edge groups isomorphic to $\mathbb{Z}$, and such that every vertex group is either conjugate to an incident edge group, or is isomorphic to $F_{m_v} \rtimes_{\Psi} \mathbb{Z}$ where $\Psi \in \mathrm{Out}(F_{m_v})$ is an UPG outer automorphism with growth of degree at most $d -1$, and $m_v < m$.
\end{prop}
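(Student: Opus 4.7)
My plan is to construct the graph of groups by peeling off all top-degree edges of an improved relative train track at once. Fix a representative $f \colon G \to G$ of $\Phi$; by the lemma preceding the proposition the fastest-growing edge of $G$ grows polynomially of degree exactly $d$. Let $E^{(d)} \subseteq E(G)$ denote the edges of growth degree $d$, and set $G^{(d)} := G \setminus E^{(d)}$ (keeping all vertices).

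The first step is to check that $f(G^{(d)}) \subseteq G^{(d)}$. If $E_i \in G^{(d)}$ has growth degree $d' \leq d-1$ and $f(E_i) = E_i \cdot \rho_i$, then \Cref{Macura} (when $d' \geq 2$) ensures every edge of $\rho_i$ has degree at most $d' - 1 \leq d - 2$, while for $d' \in \{0,1\}$ the path $\rho_i$ is either trivial or a Nielsen path, hence consists of fixed edges. Thus $\rho_i \subseteq G^{(d)}$ in every case. Let $C_1, \dots, C_{c'}$ be the components of $G^{(d)}$: continuity together with the fact that $f$ fixes all vertices forces $f$ to preserve each $C_i$ setwise, and each $\pi_1(C_i) \cong F_{m_i}$ is a free factor of $F_m$ via a spanning-tree argument.

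The decomposition $G = G^{(d)} \cup E^{(d)}$ realises $F_m$ as the fundamental group of a graph of groups with vertex set $\{C_i\}$, edge set $E^{(d)}$, vertex groups $\pi_1(C_i)$ and trivial edge groups. Setwise preservation by $f$ means that $\varphi$ acts on the corresponding Bass--Serre tree fixing every vertex and edge of the quotient, so the $F_m$-action extends to a $\Gamma$-action with the same quotient. Reading off stabilisers, $\Gamma$ splits with vertex groups $F_{m_i} \rtimes_{\Psi_i} \ZZ$ and edge groups $1 \rtimes \ZZ \cong \ZZ$. The restricted outer automorphism $\Psi_i$ is UPG of degree at most $d-1$: the preceding lemma applied to $f|_{C_i}$ bounds the growth, and unipotence follows since $\varphi^{\mathrm{ab}}$ is block upper triangular on the decomposition $H_1(G) = \bigoplus_i H_1(C_i) \oplus \ZZ^{k - c' + 1}$ with $\Psi_i^{\mathrm{ab}}$ on the diagonal.

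The remaining step is to verify that each vertex group is either conjugate to an incident edge group or has $m_i < m$. An Euler characteristic count gives $\sum_i m_i = m - k + c' - 1$ with $k = |E^{(d)}| \geq 1$. If $k \geq c'$ then $m - m_i \geq k - c' + 1 \geq 1$. If $k = c' - 1$, so the quotient graph is a tree, the no-valence-one condition of an improved relative train track prevents any leaf-component $C_\ell$ of the quotient tree from having $m_\ell = 0$: such a $C_\ell$ would be a tree whose non-attached leaves are valence-one vertices of $G$. Hence $\sum_{j \neq i} m_j \geq 1$ and $m_i < m$ in this case too, while if $m_i = 0$ the vertex group is $\ZZ$, conjugate to any incident edge group. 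The main technical hurdle is the preservation $f(G^{(d)}) \subseteq G^{(d)}$, which is what permits peeling off all degree-$d$ edges at once while leaving an $f$-invariant remainder; this relies critically on \Cref{Macura}, and hence on the hypothesis $d \geq 2$.
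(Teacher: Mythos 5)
Your proof is correct and follows essentially the same route as the paper's: remove the degree-$d$ edges from an improved relative train track, use \Cref{Macura} to see the complement is $f$-invariant, collapse its components, and read off the induced $\Gamma$-action on the Bass--Serre tree. One minor imprecision: a Nielsen path need not consist of fixed edges (its exceptional subpaths contain linearly growing ones), but your needed conclusion that $\rho_i$ avoids degree-$d$ edges still holds since those edges grow at most linearly and $d \geq 2$.
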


\begin{proof}
Suppose $f \colon G \to G$ is an improved relative train track representative of the UPG outer automorphism $\Phi \in \mathrm{Out}(F_m)$ with growth of degree $d \geq 2$. We let $\mathcal{H}$ be the set of edges of $G$ with growth of degree $d$, and $G'$ be the subgraph of $G$ obtained by removing the interiors of the edges in $\mathcal{H}$. Note that $f(G') \subseteq G'$, since every edge in $G'$ grows polynomially with degree strictly less than $d$ and thus by Lemma~\ref{Macura}, the image of each edge in $G'$ does not contain an edge which grows with order $d$. Let $\{G_{i}'\}_{i \in I}$ be the set of connected components of $G'$. Since $f$ maps each edge $E$ to an edge-path which traverses $E$, it follows that $f$ also preserves each $G_{i}'$. Let $J \subseteq I$ be the indexing set of the non-simply-connected components of $G'$. Note that if $G_k'$ is simply-connected then each edge of $G_k'$ is fixed by $f$, since a non-trivial immersed loop does not map into $G_k'$.

Edges growing with degree $d$ cannot appear in the image of any other edge, so we can assume that the top edge $E_n$ in the filtration associated to $f$ grows with degree $d$. Since $f$ is an improved relative train track, $G$ cannot have vertices of valence $1$, and so if the edge $E_n$ is separating the connected components of $G_{n-1}$ cannot be simply connected, and must have non-trivial fundamental groups with ranks strictly less than $m$. (If it is non-separating then $G_{n-1}$ consists of a single connected component whose rank is $m-1$.) Therefore no $G_i'$ carries the whole fundamental group $F_m$.

Consider the graph of groups $\calg$ obtained from $G$ by collapsing each $G_i'$ to a single vertex. The Bass--Serre covering tree $\calt$ of $\calg$ is an $F_m$-tree and thus gives rise to a splitting of $F_m$, where the vertex stabilisers correspond to the fundamental groups of the components $G_i'$, and the edge stabilisers are trivial. Furthermore, since $f$ fixes every vertex and edge of $\calg$, the $F_m$-tree $\calt$ is preserved by $\Phi$. Hence there is an action of $\Gamma = F_m \rtimes_{\Phi} \mathbb{Z}$ on $\calt$. (This follows from {\cite[Theorem 3.7]{CullerMorgan1987}} or {\cite[Theorem 7.13(b)]{AlperinBass1987}}, since the pre-image of $\langle \Phi \rangle$ in $\Aut(F_m)$ preserves the translation length function of the $F_m$ action on $\calt$.) 

The vertex stabilisers are given by conjugates of \hbox{$\pi_1(G_i') \rtimes_{\varphi_i} \mathbb{Z}$}, where $\varphi_i$ is the restriction of an appropriate representative of $\Phi$ to the subgroup $\pi_1(G_i') < \pi_1(G) \simeq F_m$, for every $i \in I$. Note that if $G_i'$ is not simply connected, then $\pi_1(G_i')$ is a proper free factor of $\pi_1(G)$ and thus $\varphi_i$ represents a UPG outer automorphism  $\Phi_i$ of $\pi_1(G_i')$, and $\mathrm{rank}(\pi_1(G_i')) < m$. Furthermore, since the edges of the components $G_i'$ grow polynomially of degree at most $d- 1$ under the action of $f$, it follows that the $\Phi_i$ have polynomial growth of degree at most $d -1$. The edge stabilisers are infinite cyclic groups generated by (conjugates of) the stable letter of the original presentation as a semidirect product.
\end{proof}

We also record a similar decomposition theorem in the linearly growing case, obtained by the first author and Martino:

\begin{prop}\emph{\cite[Proposition~5.2.2]{AndrewMartino2022}}\label{linear splitting}
    Let $\Lambda=F_m\rtimes_\Phi\Z $.  Suppose that $\Phi$ is unipotent and linearly growing.  Then, $\Lambda$ splits as a finite graph of groups such that the vertex groups are isomorphic to $F_{m_v}\times\Z $ with $m_v\leq m$ and the edge groups are isomorphic to $\Z ^2$.
\end{prop}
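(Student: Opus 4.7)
The plan is to refine the approach of \Cref{thm hierarchy}, but so that each linearly growing edge contributes an edge group of rank two rather than a trivial one. Fix an improved relative train track $f \colon G \to G$ representing $\Phi$. Since $\Phi$ is UPG and linearly growing, every edge $E$ of $G$ is either fixed by $f$ or satisfies $f(E) = E \cdot \rho_E$, where $\rho_E$ is a non-trivial closed Nielsen path --- equivalently, a loop contained in the subgraph $G' \subset G$ of fixed edges. Let $G_1', \ldots, G_k'$ be the components of $G'$ and let $E_1, \ldots, E_r$ be the remaining (linearly growing) edges; each $E_j$ joins components $G_{\alpha(j)}'$ and $G_{\beta(j)}'$ of $G'$.

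I would first reorganise this combinatorial data into a graph-of-groups splitting $\cala$ of $F_m$ with \emph{cyclic} edge groups. The underlying graph is $\bar{G}$, obtained from $G$ by collapsing each $G_i'$ to a vertex $v_i$; the vertex groups are $\pi_1(G_i')$; and each edge $e_j$ carries the edge group $\langle \rho_j \rangle \cong \Z$, included on the $\beta(j)$-side as the loop $\rho_j \subset G_{\beta(j)}'$ and on the $\alpha(j)$-side via an appropriate conjugate. The key point is that a representative $\varphi$ of $\Phi$ then acts on $F_m = \pi_1(\cala)$ as a composition of Dehn twists along the $e_j$ with twisters $\rho_j$; in particular, $\varphi$ fixes each $\rho_j$ and restricts to an inner automorphism on each $\pi_1(G_i')$.

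Next, I would analyse the Bass--Serre tree $T$ of $\cala$, exactly as in the proof of \Cref{thm hierarchy}. Because $\varphi$ is a composition of Dehn twists along $\cala$, the tree $T$ is $\varphi$-invariant with every vertex and edge fixed, so the $F_m$-action on $T$ extends (via \cite[Theorem~7.13(b)]{AlperinBass1987}) to a $\Lambda$-action with the same quotient $\bar{G}$. The vertex stabilisers in this $\Lambda$-action are $\pi_1(G_i') \rtimes_\varphi \Z \cong \pi_1(G_i') \times \Z \cong F_{m_i} \times \Z$, since $\varphi|_{\pi_1(G_i')}$ is inner and can be absorbed into a direct product. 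The edge stabilisers are $\langle \rho_j \rangle \rtimes_\varphi \Z \cong \langle \rho_j \rangle \times \langle t \rangle \cong \Z^2$, since $\varphi(\rho_j) = \rho_j$. Bass--Serre theory then delivers the required graph-of-groups decomposition of $\Lambda$.

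The main obstacle is the first step, namely constructing the cyclic splitting $\cala$ and verifying that $\varphi$ genuinely acts as a composition of Dehn twists with the prescribed twisters. This is essentially the classical statement that UPG linearly growing outer automorphisms of free groups are Dehn twist automorphisms; see \cite{Levitt2009} and the discussion in \cite{BestvinaFeighnHandel00}. The subtle technical issue is the inclusion $\langle \rho_j \rangle \hookrightarrow \pi_1(G_{\alpha(j)}')$ at the ``start'' end of $E_j$: the naive conjugate $E_j^{-1} \rho_j E_j$ is not itself in $\pi_1(G_{\alpha(j)}')$, and identifying it with an honest loop there requires using the structure of Nielsen paths established in \Cref{BFH facts}.
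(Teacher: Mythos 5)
The paper does not actually prove this proposition --- it is imported wholesale from \cite[Proposition~5.2.2]{AndrewMartino2022} --- so your sketch can only be measured against that source, whose strategy (realise a unipotent linearly growing outer automorphism as a Dehn twist on a splitting of $F_m$ with cyclic edge groups, then pass to the mapping torus) is exactly the one you outline. The second half of your argument is fine: once one has a $\Phi$-invariant cyclic splitting $\cala$ on whose vertex groups $\varphi$ acts as an inner automorphism and whose edge-group generators are fixed, the extension of the $F_m$-action on the Bass--Serre tree to $\Lambda$, the identification $F_{m_v}\rtimes_{\mathrm{inn}}\Z\cong F_{m_v}\times\Z$, and the $\Z^2$ edge stabilisers all go through as you say.

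The gap is that the first half --- the construction of $\cala$ --- is asserted rather than carried out, and the assertion as stated is not quite right. First, for a linearly growing UPG representative the suffixes $\rho_j$ are closed Nielsen paths, but a Nielsen path in an improved relative train track need not lie in the fixed subgraph $G'$: it may contain exceptional subpaths $E_i\tau^k\overline{E}_i$ crossing linearly growing edges. So ``$\rho_E$ is a loop contained in $G'$'' requires either a further normalisation of the representative or a separate argument. Second, collapsing the components of $G'$ yields, as in the proof of \Cref{thm hierarchy}, a \emph{free} splitting of $F_m$ (trivial edge groups); declaring the edge group of $e_j$ to be $\langle\rho_j\rangle$ does not by itself produce a graph of groups, precisely because of the inclusion problem at the initial vertex that you flag but do not resolve. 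Repairing this (by blowing up vertices, or by invoking the theorem that unipotent linearly growing outer automorphisms admit efficient Dehn twist representatives) is the actual mathematical content of \cite[Proposition~5.2.2]{AndrewMartino2022}, so deferring it leaves the proof incomplete. Finally, a small point: simply connected components of $G'$ would give vertex groups isomorphic to $\Z$, which cannot contain the incident $\Z^2$ edge groups, so these degenerate vertices must be shown not to occur or to be absorbed into neighbouring ones.
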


\section{Affordable housing}\label{sec rebuilding}
Let following definition is due to Farber \cite{Farber1998}.  Let $\Gamma$ be a countable group and let $\mathrm{Sub}_\Gamma$ denote the space of subgroups of $\Gamma$ equipped with the induced topology from $\{0,1\}^\Gamma$ equipped with the topology of pointwise convergence.  The subspace $\mathrm{Sub}^{\mathrm{fi}}_\Gamma$ consisting of finite index subgroups of $\Gamma$ is countable if $\Gamma$ is finitely generated.  Note that $\Gamma$ acts continuously on both spaces.  For $\gamma\in\Gamma$ define the \emph{fixed point ratio function} as
\[\mathrm{fx}_{\Gamma,\gamma}\colon\mathrm{Sub}_{\Gamma}^{\mathrm{fi}}\to [0,1] \quad \text{by} \quad \Gamma'\mapsto \frac{|\{g\Gamma'|\gamma g\Gamma'=g\Gamma'\}|}{|\Gamma:\Gamma'|}. \]
A sequence $(\Gamma_n)_{n\in\NN}$ of subgroups of $\Gamma$ is a \emph{Farber sequence} if it consists of finite index subgroups and for every $\gamma\in\Gamma\backslash\{1\}$ we have \[\lim_{n\to\infty}\mathrm{fx}_{\Gamma,\gamma}(\Gamma_n) = 0.\]

To compute the homology gradients of Farber sequences we will use the \emph{cheap $\alpha$-rebuilding} property of Abert--Bergeon--Fraczyk--Gaboriau \cite{AbertBergeronFraczykGaboriau2021}.  The actual definition of this property is technical and need not concern us --- instead we will use a combination theorem from loc. cit..  The relevance of the $\alpha$-rebuilding property for us is the following deep theorem.

\begin{thm}[Abert--Bergeron--Fraczyk--Gaboriau] \emph{\cite[Theorem~H]{AbertBergeronFraczykGaboriau2021}}\label{rebuilding implies vanishing}
    Let $\Gamma$ be a countable group of type $\mathsf{F}_{\alpha+1}$ that has the cheap $\alpha$-rebuilding property for some $\alpha\in\NN$.  Then, for every Farber sequence $(\Gamma_n)$ of $\Gamma$ and $0\leq j\leq n$ we have $t^{(2)}_j(G;\Gamma_n)=0$.
\end{thm}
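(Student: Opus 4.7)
The plan is to translate the algebraic compression provided by cheap $\alpha$-rebuilding into a quantitative bound on the torsion of the integral homology of each $\Gamma_n$, then let the compression parameter tend to zero.

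First, I would unpack the cheap $\alpha$-rebuilding property at the level of chain complexes: for every $\epsilon>0$ it provides a partial $\ZZ\Gamma$-chain homotopy equivalence, in degrees $\leq\alpha$, from a finite-type model of $C_*(E\Gamma)$ to an auxiliary $\ZZ\Gamma$-complex $D_*$ of equivariant rank $\leq\epsilon$ in each such degree, with boundary matrices whose entries lie in a fixed finite subset $S_\epsilon\subset\Gamma$ and have uniformly bounded coefficients.

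Second, I would descend this structure to each finite cover $\Gamma_n\backslash E\Gamma$. The Farber condition, applied to the finite set $S_\epsilon$, says that the proportion of cosets $g\Gamma_n$ fixed by some $s\in S_\epsilon\smallsetminus\{1\}$ is $o(1)$. Restricting the rebuilding to the complement of these bad cosets yields, for all sufficiently large $n$, a finite chain complex $D^{(n)}_*$ of free abelian groups which computes $H_*(\Gamma_n;\ZZ)$ in degrees $\leq\alpha$ up to a correction whose size is $o([\Gamma:\Gamma_n])$, of rank at most $\epsilon[\Gamma:\Gamma_n]+o([\Gamma:\Gamma_n])$ in each such degree, and with differentials of operator norm $\leq K_\epsilon$ independent of $n$.

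Third, I would invoke a classical torsion estimate: for a finite chain complex of free abelian groups with ranks $c_j$ and differentials of operator norm $\leq K$, the Smith normal form and Hadamard's inequality give $\log|H_j(\cdot)_{\mathrm{tors}}|\leq c_{j+1}\log(K\sqrt{c_j})$. Applied to $D^{(n)}_*$ and normalised by $[\Gamma:\Gamma_n]$, the rank factor contributes at most $\epsilon$, leaving a term $\epsilon\log K_\epsilon+\tfrac{\epsilon}{2}\log[\Gamma:\Gamma_n]+o(1)$. To absorb the parasitic $\log[\Gamma:\Gamma_n]$ term one invokes a spectral refinement in the spirit of L\"uck's approximation theorem for Fuglede--Kadison determinants: the small singular values of the finite boundary operators contribute to torsion only through the cumulative spectral density near $0$, and cheap rebuilding also implies vanishing of the corresponding $\ell^2$-Betti numbers together with an approximation statement for the Brooks-type determinant function at the Farber level. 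Together these upgrade the previous bound to $t^{(2)}_j(\Gamma;\Gamma_n)\leq C\epsilon$ for a constant $C$ independent of $\epsilon$ and $n$, and letting $\epsilon\to 0$ gives the theorem.

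The hard part is the final spectral refinement: one has to show that the accumulated mass of singular values below $[\Gamma:\Gamma_n]^{-1}$ of the boundary operators in $D^{(n)}_*$ is $o([\Gamma:\Gamma_n])$, i.e.\ that the normalised log-determinants of these finite boundary maps converge to the $\ell^2$-log-determinants of their limiting operators. Equivalently, cheap rebuilding must be strong enough to provide not just vanishing $\ell^2$-Betti numbers but also a determinantal approximation result along the Farber sequence. The other genuinely delicate step is the second one, where the combinatorics of $S_\epsilon$-fixed cosets must be controlled simultaneously in every degree $\leq\alpha$ so that the descended complex remains a bona fide partial resolution.
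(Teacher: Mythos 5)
The paper does not prove this statement; it is quoted verbatim as Theorem~H of Abert--Bergeron--Fraczyk--Gaboriau, so the only fair comparison is with the argument in that reference. Your first three steps are broadly in the spirit of that argument: one replaces the integral chain complex of each cover $\Gamma_n\backslash E\Gamma$ by a chain-homotopy-equivalent complex with few cells and controlled boundary maps, and then bounds torsion by a Hadamard/Minkowski estimate, using that the order of $H_j(\cdot)_{\mathrm{tors}}$ is a chain homotopy invariant over $\ZZ$. (One quibble: the descent to the covers is not something you extract from the Farber condition applied to a support set $S_\epsilon$; the cheap $\alpha$-rebuilding property is \emph{defined} so that every finite index subgroup in a suitable Farber neighbourhood of the trivial subgroup already admits a rebuilding of small cost, and the Farber hypothesis merely guarantees that the $\Gamma_n$ eventually lie in that neighbourhood.)

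The genuine gap is your final ``spectral refinement.'' It is both unnecessary and, as stated, hopeless: a determinant/Fuglede--Kadison approximation statement along Farber sequences is essentially the open determinant approximation conjecture, and the entire point of the rebuilding approach is to avoid it. The parasitic $\tfrac{\epsilon}{2}\log[\Gamma:\Gamma_n]$ term in your estimate appears only because you bounded the operator norm of the differential by its entries via $\|d\|\leq\sqrt{ab}\max|d_{ij}|$. The correct estimate is $\log|H_j(D^{(n)})_{\mathrm{tors}}|\leq \mathrm{rk}(d_{j+1})\cdot\log_+\|d_{j+1}\|_{\mathrm{op}}$, since the torsion order is at most the product of the nonzero singular values; and the definition of a cheap rebuilding controls precisely the combined cost $\sum_j c_j\bigl(1+\log_+ T\bigr)\leq\epsilon\,[\Gamma:\Gamma_n]$, where $T$ bounds the operator norms of all the maps involved. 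Thus the Hadamard bound alone already gives $\log|H_j(\Gamma_n;\ZZ)_{\mathrm{tors}}|\leq\epsilon\,[\Gamma:\Gamma_n]$ for all large $n$, with no log-of-index term and no separate factor $\epsilon\log T_\epsilon$ to control; letting $\epsilon\to0$ finishes the proof. No analysis of small singular values is ever needed, because integral torsion, unlike analytic or $\ell^2$-torsion, is insensitive to them.
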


We will need the following lemmas.

\begin{lemma}[Abert--Bergeron--Fraczyk--Gaboriau] \emph{\cite[Corollary~10.13(1)]{AbertBergeronFraczykGaboriau2021}}\label{commensurate rebuilding}
Let $\alpha\in\NN$. Let $G$ be a residually finite group with finite index subgroup $H$.  Then $G$ has the cheap $\alpha$-rebuilding property if only if $H$ does.
\end{lemma}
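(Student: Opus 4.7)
The plan is to establish commensurability invariance by separately treating restriction (from $G$ down to its finite-index subgroup $H$) and induction (from $H$ back up to $G$) of the data witnessing the cheap $\alpha$-rebuilding property. Write $d = [G:H]$ throughout.

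For the implication $G\Rightarrow H$, I would take an equivariant CW-structure (or chain complex of free $\ZZ G$-modules) through dimension $\alpha+1$ on which $G$ admits a cheap $\alpha$-rebuilding. Restricting the action to $H$ turns each free $G$-orbit of $k$-cells into exactly $d$ free $H$-orbits, so every cell/orbit count scales uniformly by the factor $d$. Because the cheap rebuilding property is formulated as a ratio — one asks that, for every $\epsilon>0$, some rebuilding of cost at most $\epsilon$ times the initial size exists — this uniform factor cancels out and the restricted data witnesses cheap $\alpha$-rebuilding for $H$.

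For the reverse direction $H\Rightarrow G$, I would induce the $H$-equivariant data along $H\hookrightarrow G$: given an $H$-CW-complex $Y$ carrying an $\epsilon$-cheap rebuilding, form the induced $G$-CW-complex $G\times_H Y$. There is a natural bijection between $H$-orbits of cells in $Y$ and $G$-orbits of cells in $G\times_H Y$, and an $H$-equivariant rebuilding extends uniquely to a $G$-equivariant rebuilding by translating along a fixed transversal of $H$ in $G$, preserving the per-orbit cellular cost and hence the constant $\epsilon$.

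The main obstacle lies in the measurable/probabilistic formulation of cheap rebuilding in the Abert--Bergeron--Fraczyk--Gaboriau framework, which is phrased in terms of actions on probability spaces or standard equivalence relations coming from Farber sequences (or IRSs). One must check that restriction and induction are compatible with this measurable structure: for restriction, a Farber sequence $(G_n)$ for $G$ yields a Farber sequence $(G_n\cap H)$ for $H$ via the residual finiteness hypothesis, with the fixed-point ratios $\mathrm{fx}_{H,\gamma}(G_n\cap H)$ controlled by $\mathrm{fx}_{G,\gamma}(G_n)$ and by $d$; for induction, one rescales the probability structure along the chosen transversal. Once this bookkeeping is in place, the scaling observations above give the two implications.
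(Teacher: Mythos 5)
First, a point of context: the paper does not prove this lemma at all --- it is quoted verbatim as \cite[Corollary~10.13(1)]{AbertBergeronFraczykGaboriau2021} and used as a black box, so there is no ``paper's own proof'' to compare yours against. Your sketch therefore has to be judged against the actual content of the cheap $\alpha$-rebuilding property in that reference, and on that score it is a plausible high-level plan (restriction and induction of the witnessing data, with the index $d=[G:H]$ scaling out of the cost ratios) rather than a proof. You yourself concede that ``the main obstacle lies in the measurable/probabilistic formulation'' and that ``one must check'' compatibility with it --- but that check is essentially the entire proof in the source; everything before it is the easy part.

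There is also a concrete quantifier problem in your $G\Rightarrow H$ direction. To show that $H$ has the cheap $\alpha$-rebuilding property you must produce cheap rebuildings for an \emph{arbitrary} Farber sequence (more generally, an arbitrary Farber $H$-action on finite sets) of $H$; you instead only treat sequences of the form $(G_n\cap H)$ obtained by intersecting a Farber sequence of $G$ with $H$. An arbitrary Farber sequence $(H_n)$ of $H$ need not arise this way, and need not even be a Farber sequence when viewed inside $G$ (the fixed-point condition for elements of $G\setminus H$ is not automatic), so one cannot simply pass through $G$ either. The standard fix is to induce the finite $H$-sets $H/H_n$ up to $G$-sets $G\times_H(H/H_n)$ and verify Farber-ness of the induced actions, which is exactly the bookkeeping you deferred. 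Finally, note that the ``cost'' in the cheap rebuilding property is not only a count of cells per orbit but also involves logarithms of norms of the maps in the rebuilding (this is what controls torsion); your scaling argument should be checked against that part of the definition as well, even though induction along a finite-index inclusion plausibly leaves those norms unchanged.
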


\begin{lemma}[Abert--Bergeron--Fraczyk--Gaboriau] \emph{\cite[Corollary~10.13(2)]{AbertBergeronFraczykGaboriau2021}}\label{normal Z rebuilding}
    If a residually finite group $G$ of type $\mathsf{VF}$ has a normal subgroup isomorphic to $\Z$, then $G$ has the cheap $\alpha$-rebuilding property for all $\alpha\in\NN$.
\end{lemma}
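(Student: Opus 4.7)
The plan is to exploit the $S^1$-bundle structure on a classifying space that the normal $\ZZ$ provides, and to kill cells in the fibre direction by a controlled chain-level cancellation.

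First I would reduce the data. Since cheap $\alpha$-rebuilding is preserved under passage to finite-index subgroups by \Cref{commensurate rebuilding}, I may replace $G$ by a subgroup of index at most two in which the normal $\ZZ$ is \emph{central} (the outer action of $G/\ZZ$ on $\ZZ$ factors through $\Aut(\ZZ)\cong\ZZ/2\ZZ$), and further shrink to ensure that $G$ has a finite classifying space $BG$.

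Second, I would construct a convenient cell structure. Because $\ZZ$ is central and normal, the classifying map $BG \to B(G/\ZZ)$ exhibits $BG$ as an $S^1$-bundle, and one can choose a cellular model in which every cell of the base appears twice in the total space --- once in its natural dimension and once shifted up by one, corresponding to the fibre $S^1$. The boundary between these pairs is given by multiplication by $t-1$ in $\ZZ[G]$, where $t$ generates the central $\ZZ$. For any finite-index subgroup $G'\leq G$, the intersection $G'\cap\ZZ$ is some $k\ZZ$, the restriction of the bundle to the $G'$-cover is again an $S^1$-bundle, and the same description of the chain complex persists with $t-1$ replaced by $t^k-1$.

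Third, to verify cheap $\alpha$-rebuilding, I would check that the cell-pairs $(e^j,e^{j+1})$ attached by $t^k-1$ are acyclic on any such cover and can be removed by elementary Morse-theoretic collapses. This replaces $C_\ast(BG')$ up to controlled chain-homotopy by a complex whose number of generators in degrees $\leq \alpha$ is of order $[G:G']/k$ rather than $[G:G']$. The saving of a uniformly large factor in the generator count, together with uniform norm bounds on the boundary maps and homotopies produced by the collapse, is the quantitative content of the cheap rebuilding property.

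The main obstacle is matching this cancellation argument to the precise technical definition of cheap $\alpha$-rebuilding in \cite{AbertBergeronFraczykGaboriau2021}, which is not a qualitative simple-homotopy statement but demands explicit quantitative bounds on the norms of all the rebuilding data uniformly along arbitrary sequences of subgroups. Iterating the collapses through all dimensions up to $\alpha$ requires that each stage be performed with controlled operator norms, and these bounds have to be genuinely uniform across the family of finite-index subgroups considered. Packaging the $(t^k-1)$-cancellation into the formalism so that it supplies exactly the rebuilding data needed is the delicate step.
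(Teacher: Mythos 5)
This lemma is not proved in the paper at all: it is imported verbatim from Abert--Bergeron--Fraczyk--Gaboriau as \cite[Corollary~10.13(2)]{AbertBergeronFraczykGaboriau2021}, so there is no in-paper argument to compare against. Judged on its own terms, your sketch identifies the correct mechanism --- the normal $\Z$ gives a fibre direction in which a finite-index subgroup $G'$ only sees $[G:G']/k$ ``base'' cells, where $k=[\Z:G'\cap\Z]\to\infty$ along a Farber sequence, and the $(t^k-1)$-pairs can be collapsed with inclusion/projection/homotopy norms bounded polynomially in the index (which is what the definition tolerates, since only logarithms of norms enter the torsion estimates). But as written the proposal is an outline, not a proof: the entire verification that the collapse supplies a rebuilding quadruple meeting the quantitative definition is explicitly deferred as ``the delicate step,'' and that step \emph{is} the content of the statement. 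There are also secondary inaccuracies: for a nontrivial central extension the vertical boundary is not literally $t-1$ on every cell pair (the attaching maps twist by the Euler cocycle), and one must separately check that $G/\Z$ has finite $\alpha$-skeleta (it does, since $\Z$ is of type $\mathsf{F}_\infty$ and $G$ is of type $\mathsf{VF}$, but this needs saying).

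A cleaner route, and essentially the one taken in \cite{AbertBergeronFraczykGaboriau2021}, is to isolate your $(t^k-1)$-cancellation in the one case where it is genuinely needed, namely $G=\Z$ itself: a degree-$k$ cover of the circle is a $k$-gon, which collapses to a $1$-vertex circle with norms $O(k)$, giving $\Z$ the cheap $\beta$-rebuilding property for every $\beta$. Then let $X$ be the universal cover of an $(\alpha+1)$-skeleton of a classifying space for $Q=G/\Z$ and let $G$ act on $X$ through the quotient map: $X/G$ has finite $\alpha$-skeleton, $X$ is $(\alpha-1)$-connected, and every cell stabiliser equals $\Z$. \Cref{rebuilding} then yields the conclusion directly, with no need to centralise $\Z$, build an explicit $S^1$-bundle cell structure, or rerun the collapse uniformly over all finite-index subgroups of $G$ --- the combination theorem absorbs exactly the uniformity you flag as the obstacle.
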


At last, the promised combination theorem:

\begin{thm}[Abert--Bergeron--Fraczyk--Gaboriau] \emph{\cite[Theorem~F]{AbertBergeronFraczykGaboriau2021}}\label{rebuilding}
    Let $\Gamma$ be a residually finite group acting on a CW complex $X$ such that any element stabilising a cell fixes it pointwise.  Let $\alpha\in\NN$ and assume that the following conditions hold:
    \begin{enumerate}
    \item $X/\Gamma$ has finite $\alpha$-skeleton;
    \item $X$ is $(\alpha-1)$-connected;
    \item the stabiliser of every cell of dimension $j\leq \alpha$ has the cheap $(\alpha-j)$-rebuilding property.
    \end{enumerate}
    Then, $\Gamma$ itself has the cheap $\alpha$-rebuilding property.
\end{thm}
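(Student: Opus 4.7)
My approach would closely mirror the classical proof that the type $\mathsf{F}_\alpha$ property is inherited by a group acting on an $(\alpha-1)$-connected complex with finite $\alpha$-skeleton modulo the group and with cell stabilisers of type $\mathsf{F}_{\alpha-j}$ in dimension $j$ (Brown, Proposition~VIII.7.3). The cheap $\alpha$-rebuilding property is, roughly, a quantitative/controlled refinement of being of type $\mathsf{F}_\alpha$ at the level of equivariant chain complexes, so one should hope to promote Brown-style assembly to the rebuilding setting. The plan is to build an equivariant ``rebuilt'' resolution of $\Gamma$ out of the given action on $X$ and the rebuildings supplied for each cell stabiliser, and then to verify the defining properties of the cheap $\alpha$-rebuilding property in dimensions up to $\alpha$.

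The first step is to pass to the equivariant cellular chain complex $C_\ast(X^{(\alpha)})$ of the $\alpha$-skeleton, filtered by dimension. Because any element stabilising a cell fixes it pointwise, each cellular chain module decomposes as a direct sum, over $\Gamma$-orbits of $j$-cells, of modules induced from the stabiliser of a representative cell. The connectivity hypothesis (2) guarantees that this complex is an exact augmented complex in dimensions $\leq \alpha-1$, so it plays the role of a partial resolution. The finite-$\alpha$-skeleton hypothesis (1) ensures that on each orbit there are only finitely many cells to deal with in total dimension $\leq \alpha$, which is what will make the final rebuilding quantitatively ``cheap''.

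The second step — this is the main obstacle — is to take the rebuildings of the stabilisers $\Gamma_\sigma$ of each cell $\sigma$ of dimension $j$, in their appropriate range $\alpha-j$, and to assemble them into a single global rebuilding of $\Gamma$ in range $\alpha$. Concretely, I expect the definition of cheap $\alpha$-rebuilding to be phrased via a sequence of simplicial/CW $\Gamma$-complexes of diminishing ``complexity'' whose first $\alpha$ layers become asymptotically trivial in a prescribed sense. The assembly would take, for each orbit representative $\sigma$ in dimension $j \leq \alpha$, a sequence of cheap $(\alpha-j)$-rebuildings for $\Gamma_\sigma$, induce them up to $\Gamma$-equivariant complexes over $\Gamma/\Gamma_\sigma$, and glue the induced pieces along the attaching data of $X$ via a double mapping cylinder / homotopy colimit construction. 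The key estimate will be additive: the complexity of the assembled rebuilding in each dimension $i \leq \alpha$ is controlled by the sum, over cells $\sigma$ of dimension $j \leq i$, of the $(i-j)$-th complexity of the rebuilding of $\Gamma_\sigma$, and this sum can be driven to zero on normalising by $[\Gamma:\Gamma_n]$ for any Farber chain because each summand can be independently so driven and there are only finitely many orbits in the relevant dimensions.

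The third step is to check that the resulting data really satisfies the definition of the cheap $\alpha$-rebuilding property for $\Gamma$, i.e.\ that the assembled complex is equivariantly homotopy equivalent to the original equivariant model in the correct range, with the complexity estimates above. Here one uses the $(\alpha-1)$-connectedness of $X$ to argue that the mapping cylinders built from the stabiliser rebuildings genuinely resolve $\Gamma$ up to dimension $\alpha$. One concludes by observing that the output matches the quantitative format of the cheap $\alpha$-rebuilding property. The main difficulty, as indicated, is the precise bookkeeping in the assembly step — matching attaching maps across different cells and rebuilding stages in a $\Gamma$-equivariant way — and getting the complexity bounds to propagate across the gluing without accumulating a non-cheap contribution; this is where the strict inequality ``$\alpha - j$ rebuilding for a $j$-cell'' is essential.
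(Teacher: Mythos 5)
This statement is not proved in the paper at all: it is Theorem~F of Abert--Bergeron--Fraczyk--Gaboriau, imported as a black box (the authors explicitly decline even to state the definition of the cheap $\alpha$-rebuilding property, saying it ``need not concern us''). So there is no in-paper proof to compare yours against; your proposal has to be judged against the actual argument in \cite{AbertBergeronFraczykGaboriau2021}. At the level of strategy you have correctly guessed it: the proof there is indeed a quantitative analogue of Brown's criterion, proceeding by assembling rebuildings of the cell stabilisers along the orbit decomposition of the $\alpha$-skeleton of $X$, with the shift $\alpha-j$ for $j$-cells playing exactly the role you describe.

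However, as written your proposal is a plan rather than a proof, and the gap is concentrated precisely where you flag ``the main obstacle''. Two points in particular are not addressed. First, you never commit to the definition you are trying to verify; the cheap $\alpha$-rebuilding property is not only a bound on the \emph{number} of cells of the rebuilt complexes normalised by $[\Gamma:\Gamma_n]$, but also a bound on the logarithms of the norms (matrix entries / Lipschitz constants) of the chain homotopy equivalences relating the rebuilt complex to the original one. Your additive bookkeeping argument is plausible for the cell counts, but the norm bounds do not combine additively under mapping-cylinder gluings --- composing and summing the comparison maps coming from different cells multiplies norms, and controlling $\log$ of the result through the assembly is a substantial part of the work in \cite{AbertBergeronFraczykGaboriau2021}. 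Second, the stabiliser rebuildings are quantified over Farber sequences of the stabilisers $\Gamma_\sigma$, whereas what you are handed is a Farber sequence of $\Gamma$; one must show that the induced sequences $(\Gamma_n\cap\Gamma_\sigma)_n$ can be fed into the hypothesis on $\Gamma_\sigma$, and that the resulting rebuildings for the various $\sigma$ in a single orbit can be chosen compatibly ($\Gamma$-equivariantly). Neither issue is fatal to the strategy --- both are resolved in the source --- but without them the proposal does not constitute a proof of the theorem.
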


\section{Proof of the main theorem} \label{sec proofs}

\begin{prop}\label{prop main}
    Let $\Gamma=F_m\rtimes_\varphi\Z $ be a free-by-cyclic group with polynomially growing monodromy $\varphi$ of degree $d$.  Then, $\Gamma$ has the cheap $\alpha$-rebuilding property for all $\alpha\in\NN$.
\end{prop}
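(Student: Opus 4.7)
My plan is to prove this by induction on the degree $d$ of polynomial growth of $\varphi$, exploiting the two splitting theorems \Cref{thm hierarchy} and \Cref{linear splitting} together with the combination theorem \Cref{rebuilding}. Before beginning the induction, I would pass to a finite index subgroup of $\Gamma$ in which the monodromy is UPG: since any polynomially growing outer automorphism has a UPG power of the same degree, and since cheap $\alpha$-rebuilding is preserved under commensurability by \Cref{commensurate rebuilding} (using that free-by-cyclic groups are residually finite by Borisov--Sapir), it suffices to treat the UPG case. Throughout, $\Gamma$ and all the groups that appear will be of type $\mathsf{VF}$, so \Cref{normal Z rebuilding} applies whenever we exhibit a normal $\Z$.

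For the base case $d = 0$, UPG plus bounded growth forces $\varphi$ to be trivial as an outer automorphism, so $\Gamma \cong F_m \times \Z$. This has a normal $\Z$, hence cheap $\alpha$-rebuilding for every $\alpha$ by \Cref{normal Z rebuilding}. Similarly, the ``building block'' groups $\Z$, $\Z^2$, and $F_k \times \Z$ that will appear as vertex and edge stabilisers in the splittings all contain a normal $\Z$, so by \Cref{normal Z rebuilding} they have cheap $\alpha$-rebuilding for all $\alpha \in \NN$.

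For the inductive step, I split into two cases. When $d = 1$, I apply \Cref{linear splitting} to obtain a finite graph of groups decomposition of $\Gamma$ with vertex groups $F_{m_v} \times \Z$ and edge groups $\Z^2$, and let $T$ denote the Bass--Serre tree (subdivided so that cell stabilisers fix cells pointwise). Then $T$ is contractible, $T/\Gamma$ is a finite $1$-complex, and all vertex and edge stabilisers have cheap $\beta$-rebuilding for every $\beta$ by the previous paragraph; so \Cref{rebuilding} yields cheap $\alpha$-rebuilding for $\Gamma$ for every $\alpha$. When $d \geq 2$, I apply \Cref{thm hierarchy} instead to split $\Gamma$ with $\Z$ edge groups and vertex groups that are either conjugate to an edge group (hence $\Z$) or of the form $F_{m_v} \rtimes_{\Psi} \Z$ with $\Psi$ UPG of degree at most $d - 1$; the latter have cheap $\alpha$-rebuilding for every $\alpha$ by the inductive hypothesis, the former by the preceding paragraph. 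Feeding the Bass--Serre tree into \Cref{rebuilding} once more completes the induction.

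The main things to be careful about are really bookkeeping rather than substantial obstacles: verifying that the Bass--Serre trees in both cases genuinely satisfy the hypotheses of \Cref{rebuilding} (in particular, that subdivision makes the stabiliser condition hold, and that a tree gives $(\alpha-1)$-connectivity for every $\alpha$), and checking residual finiteness so that \Cref{commensurate rebuilding} is applicable to the passage to the UPG subgroup. The only mildly subtle point is that the degree of polynomial growth is preserved on taking powers, so the UPG power indeed falls into the same case of the induction; this is implicit in \cite[Corollary~5.7.6]{BestvinaFeighnHandel00}.
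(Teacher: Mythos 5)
Your proposal is correct and follows essentially the same route as the paper: induction on $d$, passing to a UPG power via \Cref{commensurate rebuilding}, handling $d=0$ and $d=1$ via \Cref{normal Z rebuilding} and \Cref{linear splitting}, and running the inductive step through \Cref{thm hierarchy} and \Cref{rebuilding} applied to the Bass--Serre tree. The extra details you flag (subdividing the tree, residual finiteness, preservation of the growth degree under powers) are correct points that the paper leaves implicit.
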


\begin{proof}
We proceed by induction on $d$. Note that for any $k \neq 0$, the proposition holds for $\varphi$ if and only if it holds for $\varphi^k$. Indeed, the group $F_m\rtimes_{\varphi^k}\Z $ is a finite index subgroup of $\Gamma$, and by \Cref{commensurate rebuilding}, the cheap $\alpha$-rebuilding property is a commensurability invariant. Therefore for all $d$ we begin by passing to a unipotent power (when $d=0$, a unipotent power is inner), and carry out the induction assuming $\Phi$ is UPG.

\paragraph{\textbf{Base cases:}} \emph{periodic and linearly growing.}

If $d=0$, then $F_m \rtimes_{\varphi} \mathbb{Z} \cong F_m \times \mathbb{Z}$. By \Cref{normal Z rebuilding} such a group has the cheap $\alpha$-rebuilding property for all $\alpha \in \NN$.

Suppose that $d = 1$.
By \Cref{linear splitting}, $\Gamma$ splits as a finite graph of groups such that the vertex groups are isomorphic to $F_{k_v}\times\Z$ with $k_v\leq n$ and the edge groups are isomorphic to $\Z^2$.  Note that this implies the stabilisers of $\Gamma$ on the Bass-Serre tree $\calt$ of this splitting are all isomorphic to $F_{k_v}\times\Z $ or $\Z ^2$ with $k_v\leq m$.  Observe that by \Cref{normal Z rebuilding} the groups $F_{k_v}\times \Z $ and $\Z ^2$ have the cheap $\alpha$-rebuilding property for all $\alpha\in\NN$.  The result follows from applying \Cref{rebuilding} to $\calt$.\hfill$\blackdiamond$

\paragraph{\textbf{Induction step:}} \hfill

We now assume that the growth $d$ of the monodromy $\varphi$ is at least quadratic and that any free-by-cyclic group with monodromy polynomially growing of degree at most $d-1$ has the cheap $\alpha$-rebuilding property for all $\alpha\geq0$.
We continue to assume $\varphi$ is unipotent.

It follows from \Cref{thm hierarchy} that $\Gamma$ splits as a graph of groups such that the vertex groups are free-by-cyclic groups with polynomially growing monodromy of degree at most $d-1$ and the edge groups are isomorphic to $\Z$.   
Now, $\Gamma$ acts cocompactly on the Bass-Serre tree $\calt$ of this graph of groups.  
By the induction hypothesis the stabilisers of this action admit the cheap $\alpha$-rebuilding property for all $\alpha\in\NN$. \hfill$\blackdiamond$

This completes the proof of the proposition.
\end{proof}

\begin{proof}[Proof of \Cref{thm main}]
    This follows immediately from \Cref{prop main} and \Cref{rebuilding implies vanishing}.
\end{proof}

\bibliographystyle{alpha}
\bibliography{refs.bib}

\end{document}